\newtheorem{thm}{Theorem}[section]
\newtheorem{prop}[thm]{Proposition}
\newtheorem{lemma}[thm]{Lemma}
\newtheorem{cor}[thm]{Corollary}
\newtheorem{dfn}[thm]{Definition}
\theoremstyle{definition} \newtheorem{ex}[thm]{Example}
\theoremstyle{definition} \newtheorem{rmk}[thm]{Remark}
\newcommand{\cc}{\mathbb{C}}
\newcommand{\qq}{\mathbb{Q}}
\newcommand{\zz}{\mathbb{Z}}
\newcommand{\ff}{\mathbb{F}}
\newcommand{\proj}{\mathbb{P}}
\newcommand{\Gal}{\mathrm{Gal}}
\newcommand{\Sp}{\mathrm{Sp}}
\newcommand{\GSp}{\mathrm{GSp}}
\newcommand{\Hom}{\mathrm{Hom}}
\newcommand{\End}{\mathrm{End}}
\newcommand{\Aut}{\mathrm{Aut}}
\newcommand{\Spec}{\mathrm{Spec}}
\newcommand{\Div}{\mathrm{Div}}
\newcommand{\Pic}{\mathrm{Pic}}
\newcommand{\et}{\mathrm{\acute{e}t}}
\newcommand{\tame}{\mathrm{tame}}
\newcommand{\tp}{\mathrm{top}}
\newcommand{\alg}{\mathrm{alg}}
\title{Boundedness results for $2$-adic Galois images associated to hyperelliptic Jacobians}
\author{Jeffrey Yelton}
\begin{document}

\maketitle

\begin{abstract}

Let $K$ be a number field, and let $C$ be a hyperelliptic curve over $K$ with Jacobian $J$.  Suppose that $C$ is defined by an equation of the form $y^{2} = f(x)(x - \lambda)$ for some irreducible monic polynomial $f \in \mathcal{O}_{K}[x]$ of discriminant $\Delta$ and some element $\lambda \in \mathcal{O}_{K}$.  Our first main result says that if there is a prime $\mathfrak{p}$ of $K$ dividing $(f(\lambda))$ but not $(2\Delta)$, then the image of the natural $2$-adic Galois representation is open in $\GSp(T_{2}(J))$ and contains a certain congruence subgroup of $\Sp(T_{2}(J))$ depending on the maximal power of $\mathfrak{p}$ dividing $(f(\lambda))$.  We also present and prove a variant of this result that applies when $C$ is defined by an equation of the form $y^{2} = f(x)(x - \lambda)(x - \lambda')$ for distinct elements $\lambda, \lambda' \in K$.  We then show that the hypothesis in the former statement holds for almost all $\lambda \in \mathcal{O}_{K}$ and prove a quantitative form of a uniform boundedness result of Cadoret and Tamagawa.

\end{abstract}

\section{Introduction} \label{S1}

Let $K$ be a number field with absolute Galois group $G_{K}$, and let $C$ be a hyperelliptic curve defined over $K$; i.e. $C$ is a smooth projective curve defined by an equation of the form $y^{2} = f(x)$ for some squarefree polynomial $f$ of degree $d \geq 3$.  (Note that in the case of $d = 3$, $C$ is an elliptic curve.)  It is well known that the genus of $C$ is given by $g = \lfloor (d + 1) / 2 \rfloor$.  We denote the Jacobian variety of $C$ by $J$; it is an abelian variety of dimension $g$.  For each prime $\ell$, we let $T_{\ell}(J)$ denote the $\ell$-adic Tate module of $J$, which is a free $\zz_{\ell}$-module of rank $2g$.  We write $\rho_{\ell} : G_{K} \to \Aut(T_{\ell}(J))$ for the natural $\ell$-adic Galois action on this Tate module.  The Tate module $T_{\ell}(J)$ is endowed with the Weil pairing defined with respect to the canonical principal polarization on $J$, which we write as $e_{\ell} : T_{\ell}(J) \times T_{\ell}(J) \to \zz_{\ell}$; it is a $\zz_{\ell}$-bilinear skew-symmetric pairing.  Let $\Sp(T_{\ell}(J))$ denote the group of symplectic automorphisms of $T_{\ell}(J)$ with respect to the pairing $e_{\ell}$, and let 
$$\GSp(T_{\ell}(J)) := \{\sigma \in \mathrm{Aut}_{\zz_{\ell}}(T_{\ell}(J))\ |\ e_{\ell}(P^{\sigma}, Q^{\sigma}) = \chi_{\ell}(\sigma) e_{\ell}(P, Q) \ \forall P, Q \in T_{\ell}(J)\}$$
 denote the group of symplectic similitudes, where $\displaystyle \chi_{\ell} : G_{K} \to \zz_{\ell}^{\times}$ is the $\ell$-adic cyclotomic character.

It is well known that the image $G_{\ell}$ of $\rho_{\ell}$ is always a closed subgroup of $\GSp(T_{\ell}(J))$ and that in fact there is some hyperelliptic Jacobian $J$ of a given dimension $g$ such that the inclusion $G_{\ell} \subseteq \GSp(T_{\ell}(J))$ has finite index (or equivalently, that $G_{\ell}$ is an open subgroup of the $\ell$-adic Lie group $\GSp(T_{\ell}(J))$); see for instance \cite[Theorem 1.1]{yelton2015images}.  Note that the subgroup $G_{\ell} \cap \Sp(T_{\ell}(J)) \subset G_{\ell}$ coincides with the image of the Galois subgroup which fixes the extension obtained by adjoining all $\ell$-power roots of unity to $K$.  By \cite[Theorem 3]{bogomolov1981points}, the Lie algebra of $G_{\ell}$ contains the subalgebra of homotheties in the Lie algebra of $\GSp(T_{\ell}(J))$; it follows that $G_{\ell}$ has finite index in $\GSp(T_{\ell}(J))$ if and only if $G_{\ell} \cap \Sp(T_{\ell}(J))$ has finite index in $\Sp(T_{\ell}(J))$.

There have been many results stating that $G_{\ell}$ has finite index in $\GSp(T_{\ell}(J))$ under various hypotheses for the polynomial defining the hyperelliptic curve.  For instance, Y. Zarhin has proven this for large enough genus in the case of hyperelliptic curves defined by equations of the form $y^{2} = f(x)$ or $y^{2} = f(x)(x - \lambda)$ with $\lambda \in K$, where the Galois group of $f$ is the full symmetric or alternating group (\cite[Theorem 2.5]{zarhin2002very} and \cite[Theorem 8.3]{zarhin2010families}; see also \cite[Theorem 1.3]{zarhin2013two} for a variant of this where the curve is defined using two parameters).  A. Cadoret and A. Tamagawa have also proven (\cite[Theorems 1.1 and 5.1]{cadoret2012uniform}) that for any family of hyperelliptic Jacobians over a smooth, geometrically connected, separated curve over $K$, this openness condition will be satisfied for the $\ell$-adic Galois action associated to the fibers over all but finitely many of the $K$-points of the base, and that in fact the indices of the $\ell$-adic Galois images corresponding to these fibers are uniformly bounded.  However, there have been very few results which give explicit bounds for the index of $G_{\ell}$ in $\GSp(T_{\ell}(J))$ in such cases.

Our aim in this paper is to give some similar results on the openness of the $\ell$-adic Galois images in the group of symplectic similitudes associated to Jacobians of hyperelliptic curves whose defining polynomials satisfy certain hypotheses, and to provide formulas giving explicit bounds for the indices of the $2$-adic Galois images in these cases.  (Unfortunately, our method currently cannot give us explicit bounds for the indices of the $\ell$-adic Galois images for odd primes $\ell$.)

We state our main results below.  In these statements as well as in the rest of the paper, we use the following notation.  For any integer $N \geq 1$, we denote the level-$N$ congruence subgroup of $\Sp(T_{2}(J))$ by $\Gamma(N) := \{\sigma \in \Sp(T_{2}(J)) \ | \ \sigma \equiv 1 \ (\mathrm{mod} \ N)\}$.  We denote the ring of integers of a number field $K$ by $\mathcal{O}_{K}$. Finally, we write $v_{2} : \qq^{\times} \to \zz$ for the (normalized) $2$-adic valuation on $\qq$.

\begin{thm} \label{thm main1}

Let $K$ be a number field, and let $f \in \mathcal{O}_{K}[x]$ be an irreducible monic polynomial of degree $d \geq 2$ with discriminant $\Delta$.  Let $J$ be the Jacobian of the hyperelliptic curve with defining equation $y^{2} = f(x)(x - \lambda)$ for some $\lambda \in \mathcal{O}_{K}$, and define the $\ell$-adic Galois images $G_{\ell}$ as above.  Suppose that there is a prime $\mathfrak{p}$ of $\mathcal{O}_{K}$ which divides $(f(\lambda))$ but not $(2\Delta)$.

a) The Lie subgroup $G_{\ell} \subset \GSp(T_{\ell}(J))$ is open for all $\ell$ different from the residue characteristic of $\mathfrak{p}$.  In particular, the Lie subgroup $G_{2} \subset \GSp(T_{2}(J))$ is open.

b) We have $G_{2} \cap \Sp(T_{2}(J)) \supsetneq \Gamma(2^{2v_{2}(m) + 2})$, where $m \geq 1$ is the greatest integer such that $\mathfrak{p}^{m} \mid (f(\lambda))$.  If in addition $d = 3$, then $G_{2} \cap \Sp(T_{2}(J)) \supsetneq \Gamma(2^{v_{2}(m) + 1})$.

\end{thm}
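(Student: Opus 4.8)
The plan is to recycle the local analysis at $\mathfrak{p}$ that underlies the earlier assertions and then to add one global ingredient that becomes available when $d = 3$, namely the structure of the mod-$2$ representation.

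First I would pin down the reduction of $C$ at $\mathfrak{p}$. Since $\mathfrak{p} \nmid \Delta$ the reduction $\bar f$ is separable, so the roots $\alpha_1, \dots, \alpha_d$ of $f$ stay pairwise distinct modulo $\mathfrak{p}$; since $\mathfrak{p} \mid (f(\lambda))$, the element $\lambda$ reduces to exactly one of them, say $\alpha_1 \in \mathcal{O}_{K_\mathfrak{p}}$ (Hensel), and $v_\mathfrak{p}(\lambda - \alpha_1) = v_\mathfrak{p}(f(\lambda)) = m$. Writing $f(x) = (x-\alpha_1)h(x)$ with $h(\alpha_1)$ a unit and completing the square in $y^2 = (x-\alpha_1)(x-\lambda)h(x)$ near $x = \alpha_1$ (here $\mathfrak{p} \nmid 2$ is essential) one gets a local equation $\xi\eta = \mu^2/4$ with $v_\mathfrak{p}(\mu) = m$, i.e.\ a node of thickness $2m$, while all other branch points stay smooth and separated; hence $J$ acquires semistable reduction at $\mathfrak{p}$ with one-dimensional toric part (Kodaira type $I_{2m}$ when $d = 3$, where $J$ is an elliptic curve). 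The standard description of the Galois action on the Tate module of a semistable abelian variety then gives that $\rho_2(I_\mathfrak{p})$ equals $\{1 + tN : t \in 2^{v_2(2m)}\zz_2\} = \{1 + tN : t \in 2^{k}\zz_2\}$ where $k := v_2(m) + 1$, $N \in \End(T_2(J))$ is a primitive rank-one nilpotent (the monodromy operator, a transvection up to the symplectic form), and each such element is a power of a symplectic transvection. For the general assertion one conjugates this single transvection by all of the already-established open group $G_2$ and recovers the torus directions only from commutators, hence lands at level $2k$; the point of the $d = 3$ refinement is to reach level $k$.

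Now suppose $d = 3$, so $\Sp(T_2(J)) = \SL_2(\zz_2)$. The four branch points are $\alpha_1, \alpha_2, \alpha_3, \lambda$, with $\lambda$ rational and $\Gal(f/K)$ transitive on $\{\alpha_1,\alpha_2,\alpha_3\}$; under the usual identification of the three nonzero points of $J[2]$ with the three partitions of the branch set into pairs, $\bar\rho_2(G_K) \subseteq \Sp(J[2]) = \SL_2(\ff_2) \cong S_3$ is precisely the image of $\Gal(f/K)$, which — $f$ being an irreducible cubic — contains an element $\bar g$ of order $3$. Fix a lift $g \in G_2$ of $\bar g$. The conjugates $N, gNg^{-1}, g^2Ng^{-2}$ are again primitive rank-one nilpotents, so the unipotent subgroups $U_i := \{1 + t\,g^iNg^{-i} : t \in 2^k\zz_2\}$ $(i = 0,1,2)$ are conjugates in $G_2$ of $\rho_2(I_\mathfrak{p})$ consisting of determinant-one matrices, hence lie in $G_2 \cap \SL_2(\zz_2)$; modulo $2$ their nilpotents form the full $\langle\bar g\rangle$-orbit of the three nonzero nilpotents of $\mathfrak{sl}_2(\ff_2)$, which span $\mathfrak{sl}_2(\ff_2)$, so by Nakayama $N, gNg^{-1}, g^2Ng^{-2}$ span $\mathfrak{sl}_2(\zz_2)$ over $\zz_2$. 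I would finish with the group-theoretic fact that the closed subgroup of $\SL_2(\zz_2)$ generated by three unipotent subgroups $\{1 + 2^k\zz_2\cdot M_j\}$ with $M_1, M_2, M_3$ rank-one nilpotents spanning $\mathfrak{sl}_2(\zz_2)$ is exactly $\Gamma(2^k)$: it lies in $\Gamma(2^k)$, its image in the abelian quotient $\Gamma(2^k)/\Gamma(2^{2k})$ is everything (the generators map onto $2^kM_j$ there), and iterated commutators of the generators exhaust $\Gamma(2^{2k})$ (the case $k = 1$ needs a little extra care with the non-uniform group $\Gamma(2)$, but the outcome is unchanged). Hence $G_2 \cap \Sp(T_2(J)) \supseteq \Gamma(2^{v_2(m)+1})$, and the inclusion is strict because $G_2 \cap \SL_2(\zz_2)$ surjects onto the nontrivial group $\langle\bar g\rangle$ whereas $\Gamma(2^{v_2(m)+1})$ reduces to the identity mod $2$.

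The main obstacle is precisely obtaining the sharp level $k = v_2(m)+1$ instead of $2k$: this needs the torus directions of $\mathfrak{sl}_2$ at level $k$, which inertia at $\mathfrak{p}$ cannot supply by itself. For $g = 1$ this is rescued by a coincidence of small numbers — the order-$3$ element that irreducibility of $f$ forces into $\bar\rho_2(G_K)$ sweeps the one transvection line coming from $\mathfrak{p}$ across all three nilpotent lines of $\mathfrak{sl}_2(\ff_2)$, and those happen to span — while for $g \ge 2$ no such cheap supply of transverse transvections is available. A subsidiary point to watch is the bookkeeping of the node's thickness, the factor $2$ turning $m$ into $2m$ and hence $v_2(m)$ into $v_2(m)+1$, which genuinely uses $\mathfrak{p} \nmid 2$.
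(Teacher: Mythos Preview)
Your local analysis at $\mathfrak{p}$ (a single node of thickness $2m$, inertia image topologically generated by $T_c^{2m}$ for a primitive $c$) is correct and agrees with what the paper extracts via its Proposition~\ref{prop Galois action local}, though the paper reaches it through a topological comparison with Dehn twists rather than semistable-reduction theory.  Your $d=3$ argument is also correct and is essentially the paper's Proposition~\ref{prop open subgroup degree 4}: three transvection directions whose images span $\mathfrak{sl}_2(\ff_2)$ force the generated closed subgroup to be all of $\Gamma(2^{v_2(m)+1})$.

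The gap is in the general assertion.  You write that one ``conjugates this single transvection by all of the already-established open group $G_2$'' --- but openness of $G_2$ is part of the conclusion of this theorem, not a hypothesis, so that step is circular.  A single transvection direction in $\Sp_{2g}(\zz_2)$ cannot be bootstrapped into an open subgroup without some external supply of conjugating elements, and you have not identified any for $d\geq 4$.

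The paper's resolution is exactly your $d=3$ mechanism, run for arbitrary $d$.  Since $f$ is irreducible, $\Gal(f/K)$ is transitive on $\alpha_1,\dots,\alpha_d$; the paper passes to the splitting field $L$, over which $\mathfrak{p}$ (unramified, as $\mathfrak{p}\nmid\Delta$) splits into distinct primes $\mathfrak{p}_1,\dots,\mathfrak{p}_d$, one for each root, with $\mathfrak{p}_i^m$ exactly dividing $(\lambda-\alpha_i)$.  Applying the local analysis at each $\mathfrak{p}_i$ yields $d$ transvection powers $T_{c_1}^{2m},\dots,T_{c_d}^{2m}$ inside $\rho_2(G_L)=G_2\cap\Gamma(2)$ --- equivalently, these are the conjugates of your single transvection under lifts to $G_2$ of the transitive group $\Gal(f/K)$, which you \emph{do} know lie in $G_2$ via the mod-$2$ representation.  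A Lie-algebra computation (Proposition~\ref{prop open subgroup}: the elements $t_i=T_{c_i}-1$ together with their brackets $[t_i,t_j]$ span $\mathfrak{sp}_{2g}(\ff_2)$) then shows that these generate a subgroup containing $\Gamma(2^{2v_2(m)+2})$.  The doubling of the exponent is precisely because for $g\geq 2$ the $2g$ nilpotent directions do not span $\mathfrak{sp}_{2g}$ and one must pass to commutators, whereas for $g=1$ the three nilpotents already span $\mathfrak{sl}_2(\ff_2)$ and no commutators are needed.
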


\begin{thm} \label{thm main2}

Assume the same notation as in the statement of Theorem \ref{thm main1}, except that the defining equation of the hyperelliptic curve is $y^{2} = f(x)(x - \lambda)(x - \lambda')$ for distinct elements $\lambda, \lambda' \in \mathcal{O}_{K}$.  Suppose that there is a prime $\mathfrak{p}$ of $\mathcal{O}_{K}$ which divides $(f(\lambda))$ but not $(\lambda - \lambda')$ or $(2\Delta)$ and a prime $\mathfrak{p}'$ which divides $(\lambda - \lambda')$ but not $(f(\lambda))$ or $(2\Delta)$.

a) The Lie subgroup $G_{\ell} \subset \GSp(T_{2}(J))$ is open for all $\ell$ different from the residue characteristics of $\mathfrak{p}$ and $\mathfrak{p'}$.  In particular, the Lie subgroup $G_{2} \subset \GSp(T_{2}(J))$ is open.

b) We have $G_{2} \cap \Sp(T_{2}(J)) \supsetneq \Gamma(2^{2v_{2}(m) + 2})$ if $v_{2}(m') \leq v_{2}(m)$ or $d = 2g$ and $G_{2} \cap \Sp(T_{2}(J)) \supsetneq \Gamma(2^{v_{2}(m) + v_{2}(m') + 2})$ otherwise, where $m \geq 1$ is the greatest integer such that $\mathfrak{p}^{m} \mid (f(\lambda))$ and $m' \geq 1$ is the greatest integer such that $\mathfrak{p}'^{m'} \mid (\lambda - \lambda')$.  If in addition $d = 2$, then $G_{2} \cap \Sp(T_{2}(J)) \supsetneq \Gamma(2^{\max \{v_{2}(m), v_{2}(m')\} + 1})$.

\end{thm}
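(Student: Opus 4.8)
The plan is to prove Theorem~\ref{thm main2} by the same strategy as Theorem~\ref{thm main1}, the new feature being that the second parameter $\lambda'$ and the second auxiliary prime $\mathfrak{p}'$ supply a second vanishing cycle. As in the one-parameter case, I would first reduce to a statement about $G_{2} \cap \Sp(T_{2}(J))$: since $G_{2}$ is a closed subgroup of $\GSp(T_{2}(J))$, it is open there exactly when $G_{2} \cap \Sp(T_{2}(J))$ contains some $\Gamma(N)$, so it suffices to produce the congruence subgroups in the statement. Throughout I will write $T_{v}$ for the symplectic transvection along a primitive vector $v \in T_{2}(J)$ and use the standard identification of $J[2]$ with the quotient of the group of even-cardinality subsets of the branch locus of $C \to \proj^{1}_{K}$ (the roots of $f$, together with $\lambda$, $\lambda'$, and $\infty$ when $d$ is odd) by $\{\emptyset, \text{everything}\}$.

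The second step is the local analysis at $\mathfrak{p}$ and $\mathfrak{p}'$. Because $\mathfrak{p} \nmid (2\Delta)$ the polynomial $f$ stays separable mod $\mathfrak{p}$, and because $\mathfrak{p} \mid (f(\lambda))$ while $\mathfrak{p} \nmid (\lambda - \lambda')$, reduction mod $\mathfrak{p}$ makes $\lambda$ congruent to exactly one root $\alpha$ of $f$, which lifts by Hensel's lemma to $\alpha \in \mathcal{O}_{K_{\mathfrak{p}}}$ with $v_{\mathfrak{p}}(\lambda - \alpha) = m$. Consequently $C$ has semistable reduction at $\mathfrak{p}$ with a node coming from the collision of the Weierstrass points above $\lambda$ and $\alpha$; a direct computation on a local model (the node takes the form $st = (\text{unit}) \cdot \pi_{\mathfrak{p}}^{2m}$) shows its thickness has $2$-adic valuation $v_{2}(m) + 1$. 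Symmetrically, the conditions on $\mathfrak{p}'$ force $C$ to have semistable reduction at $\mathfrak{p}'$ with a single node from the collision of the Weierstrass points above $\lambda$ and $\lambda'$, whose thickness has $2$-adic valuation $v_{2}(m') + 1$. By the Picard--Lefschetz description of inertia acting on the Tate module of a semistably degenerating abelian variety, the image of $I_{\mathfrak{p}}$ in $\Sp(T_{2}(J))$ then contains the transvection $\tau_{\mathfrak{p}} = T_{e_{\mathfrak{p}}}^{\,2^{v_{2}(m)+1} u}$ for some $u \in \zz_{2}^{\times}$, where $e_{\mathfrak{p}} \in T_{2}(J)$ is a primitive lift of the vanishing-cycle class $[(\lambda,0)-(\alpha,0)] \in J[2]$; likewise the image of $I_{\mathfrak{p}'}$ contains $\tau_{\mathfrak{p}'} = T_{e_{\mathfrak{p}'}}^{\,2^{v_{2}(m')+1} u'}$ with $e_{\mathfrak{p}'}$ a primitive lift of $[(\lambda,0)-(\lambda',0)]$. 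Both $\tau_{\mathfrak{p}}$ and $\tau_{\mathfrak{p}'}$ lie in $G_{2} \cap \Sp(T_{2}(J))$.

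Third, I would use the irreducibility of $f$: $G_{K}$ permutes the roots of $f$ transitively, so conjugating $\tau_{\mathfrak{p}}$ by elements of $G_{2}$ (noting $G_{2} \cap \Sp(T_{2}(J)) \trianglelefteq G_{2}$) yields transvections $T_{\sigma e_{\mathfrak{p}}}^{\,2^{v_{2}(m)+1} u}$ whose reductions mod $2$ are all the classes $[(\lambda,0)-(\alpha_{i},0)]$, $\alpha_{i}$ ranging over the roots of $f$. A linear-algebra check in $J[2]$ — distinguishing $d = 2g$, where these $d$ classes already form a basis, from $d = 2g-1$, where they span a hyperplane complemented by the class of $e_{\mathfrak{p}'}$ — shows that together with $e_{\mathfrak{p}'} \bmod 2$ they span $J[2]$; moreover any two of the classes $[(\lambda,0)-(\alpha_{i},0)]$, and each of them together with $[(\lambda,0)-(\lambda',0)]$, are non-orthogonal under $e_{2}$ since the corresponding supports meet precisely in the point above $\lambda$. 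Inserting this spanning, pairwise non-orthogonal family of transvections with their prescribed $2$-adic depths into the group-theoretic input behind Theorem~\ref{thm main1}, I would conclude that $G_{2}$ is open and read off the stated levels: commutators within the $\mathfrak{p}$-family sit at twice its depth $v_{2}(m)+1$, giving $\Gamma(2^{2v_{2}(m)+2})$, and this is the governing bound when $d = 2g$ (so the $\mathfrak{p}$-family spans on its own) or when $v_{2}(m') \leq v_{2}(m)$; when $d = 2g - 1$ and $v_{2}(m') > v_{2}(m)$ the directions involving $e_{\mathfrak{p}'}$ are produced instead by commuting the $\mathfrak{p}$-family against $\tau_{\mathfrak{p}'}$, yielding depth $v_{2}(m) + v_{2}(m') + 2$; and when $d = 2$, where $\Sp(T_{2}(J)) = \SL(T_{2}(J))$ and a basis of the Lie algebra is realized directly by transvections at $e_{\mathfrak{p}}$, $e_{\mathfrak{p}'}$ and their sum so that no doubling is needed, one gets the sharper level $\max\{v_{2}(m), v_{2}(m')\} + 1$. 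The inclusions are strict because $G_{2} \cap \Sp(T_{2}(J))$ also contains $\tau_{\mathfrak{p}}$ and $\tau_{\mathfrak{p}'}$ themselves, at their shallower depths.

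I expect the main obstacle to be twofold. First, the local monodromy step must be carried out carefully: one needs an explicit semistable model near each node and a verification that the thickness is exactly $2m$ (respectively $2m'$) up to a unit, so that the depths $v_{2}(m)+1$ and $v_{2}(m')+1$ — rather than these quantities only up to a bounded error — are correct; one should also check that any further node which may appear at $\mathfrak{p}$ (if $\mathfrak{p}$ happens to divide $(f(\lambda'))$ as well) is orthogonal to $e_{\mathfrak{p}}$ and hence does no harm. Second, and more substantially, one must show that the transvections generate the \emph{full} congruence subgroup at the claimed level and not merely a finite-index subgroup of it; this rests on a careful computation of brackets in $\mathfrak{sp}(T_{2}(J))$ modulo powers of $2$, tracking how $2$-adic depths add under commutators, which mixed brackets are needed to reach each direction, the low-level subtleties of $\Sp(\zz_{2})$ (non-uniformity of $\Gamma(2)$), and a separate, sharper argument in the $\SL_{2}$ case $d = 2$. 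It is exactly this bookkeeping that generates the case distinction in the statement, and it is where most of the effort will go.
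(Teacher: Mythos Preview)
Your proposal is correct and follows essentially the same strategy as the paper's proof. The paper proves Theorems~\ref{thm main1} and~\ref{thm main2} together by passing to the splitting field $L$ of $f$, observing that $\mathfrak{p}$ splits there into distinct primes $\mathfrak{p}_{1},\dots,\mathfrak{p}_{d}$ with $\mathfrak{p}_{i}^{m}$ exactly dividing $(\lambda-\alpha_{i})$, taking one further prime of $L$ over $\mathfrak{p}'$, and then invoking Theorem~\ref{thm several primes} (which packages Proposition~\ref{prop Galois action local} and Propositions~\ref{prop open subgroup}, \ref{prop open subgroup degree 4}); your device of producing one transvection at $\mathfrak{p}$ and then conjugating by $G_{2}$ using the transitivity of $\Gal(L/K)$ on the roots is the same manoeuvre viewed from the other side, since the $G_{K}$-conjugates of a decomposition group at $\mathfrak{p}_{1}$ are exactly the decomposition groups at the $\mathfrak{p}_{i}$. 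The only cosmetic difference is that you cite Picard--Lefschetz/SGA7 for the local monodromy, whereas the paper uses its own Dehn-twist comparison (Proposition~\ref{prop Galois action local}) to obtain the transvection together with its mod-$2$ class; your ``pairwise non-orthogonal spanning family'' condition is precisely the mod-$2$ hypothesis of Proposition~\ref{prop open subgroup}, and your case split ($d=2g$ versus $d=2g-1$, and $d=2$) matches the paper's $d'=2g+2$ versus $d'=2g+1$ and $d'=4$ cases of Theorem~\ref{thm several primes}.
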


\begin{rmk} \label{rmk almost all fibers}

For a fixed irreducible monic polynomial $f \in \mathcal{O}_{K}[x]$ of degree $d \geq 2$ with discriminant $\Delta$, it is not hard to show using Faltings' Theorem that the hypothesis in Theorems \ref{thm main1} is satisfied for almost all $\lambda \in \mathcal{O}_{K}$ (in \S\ref{S5}, we will prove in a similar fashion that it is satisfied for almost all $\lambda \in K$ if $d \geq 4$).  It follows immediately that there are also infinitely many choices of $(\lambda, \lambda') \in K \times K$ satisfying the hypotheses in Theorem \ref{thm main2} for this polynomial $f$.

\end{rmk}

\begin{cor} \label{cor end}

If $J$ is the Jacobian of a hyperelliptic curve satisfying the hypotheses of Theorem \ref{thm main1} or \ref{thm main2}, then its endomorphism ring $\End(J)$ coincides with $\zz$.  In particular, such a Jacobian $J$ is absolutely simple.

\end{cor}

\begin{proof}

It is known that there is a finite algebraic extension $K'$ of $K$ over which every endomorphism of an abelian variety over a field $K$ is defined (see \cite[Theorem 2.4]{silverberg1992fields}), so that each endomorphism in $\End(J)$ commutes with the action of $\Gal(\bar{K} / K')$ on torsion points.  The corollary now follows from noting that the only endomorphisms in $\End(T_{\ell}(J))$ which commute with everything in an open subgroup of $\GSp(T_{\ell}(J))$ are scalars.

\end{proof}

The key ingredient used in proving the above theorems is a method of describing Galois actions on $\ell$-adic Tate modules of hyperelliptic Jacobians defined over the fraction field of a strictly Henselian DVR of residue characteristic $p \neq 2, \ell$ by looking at the valuations of the differences between the roots of the defining polynomial, which is derived from results shown in joint work with H. Hasson (\cite{hasson2020prime}).  This way of looking at $\ell$-adic Galois actions associated to hyperelliptic Jacobians over local fields is very similar to the ``method of clusters" used by S. Anni and V. Dokchitser in \cite{anni2020constructing}.  Our approach seems quite powerful and should lead to many similar boundedness results in a number of situations where one can compute valuations of the differences between the roots of the defining polynomial with respect to various primes of the ground field.  Unfortunately, in practice, these valuations (or even the roots themselves) may be difficult to calculate, and so our main focus here is on obtaining results such as the ones stated above where the hypotheses are very easy to verify.

The rest of this paper is organized as follows.  In \S\ref{S2}, we use the main results of \cite{hasson2020prime}, which show that over the fraction field of a strictly Henselian DVR of characteristic $p \neq 2$, for primes $\ell \neq p$, the $\ell$-adic Galois action factors through the tame quotient of the absolute Galois group and can be described in terms of Dehn twists with respect to certain loops on a complex hyperelliptic curve.  In particular cases such as when exactly two roots of the defining polynomial coalesce in the reduction over the residue field, we will show (Proposition \ref{prop Galois action local}) that such a Dehn twist induces a transvection in the symplectic group.  We will later put local data together to show that over a number field $K$, the $\ell$-adic Galois image contains certain powers of several sufficiently ``independent" transvections.  In \S\ref{S3}, we will demonstrate using elementary matrix algebra that the group generated by these powers of transvections is ``large" and, in the $\ell = 2$ case, contains a certain congruence subgroup.  In \S\ref{S4}, we will use what we have shown in \S\ref{S2} and \S\ref{S3} to prove Theorems \ref{thm main1} and \ref{thm main2} as well as to prove an auxiliary result that applies to a more general situation (Theorem \ref{thm several primes}).  In \S\ref{S5}, we will show using Theorem \ref{thm main1} that for a given $f \in \mathcal{O}_{K}[x]$ of degree $d \geq 3$, the $2$-adic Galois image associated to the hyperelliptic curves defined by $y^{2} = f(x)(x - \lambda)$ for all but finitely many $\lambda \in \mathcal{O}_{K}$ contains a principal congruence subgroup which depends only on $d$ (Theorem \ref{thm uniform bounds}).  In fact, for $d \geq 4$ even, in Theorem \ref{thm uniform bounds}(c) we will provide a uniform bound for indices of the $2$-adic Galois images associated to almost all fibers of such a one-parameter family over the $K$-line, as is guaranteed by \cite[Theorems 1.1 and 5.1]{cadoret2012uniform}.  Finally, in \S\ref{S6}, we will describe an alternate approach to proving the key Proposition \ref{prop Galois action local} from \S\ref{S2} in the case that $\ell = 2$.

The author is grateful to a MathOverflow user whose comment on question 264281 helped to inspire the arguments for the results in \S\ref{S5}.  The author would also like to thank the referee of an earlier version of this article, for a suggestion which has allowed him to remove an earlier hypothesis on the class number of $K$ from the statements of the results in \S\ref{S5}.  The author would also like to thank another referee who provided an enlightening strategy for the proof of (an alternate formulation of) Proposition \ref{prop Galois action local}, which is now described in \S\ref{S6}.

\section{Hyperelliptic Jacobians over local fields and tame Galois actions} \label{S2}

We retain all notation introduced in the previous section.  In this section, we write $\widehat{\zz}$ for the profinite completion of $\zz$ and use the symbol $\widehat{\pi}_{1}$ to denote the profinite completion of the fundamental group of a topological space.  For any profinite group $G$, we write $G^{(p')}$ for its maximal prime-to-$p$ quotient.  Note that since $G^{(p')}$ is a characteristic quotient of $G$, any action on $G$ induces an action on $G^{(p')}$.

Now we choose a prime $\mathfrak{p}$ of $K$ of residue characteristic $p \neq 2$.  Fix a strict Henselization of the localization of $K$ at the prime $\mathfrak{p}$ and denote it by $\mathcal{R}_{\mathfrak{p}}$ and its fraction field by $\mathcal{K}_{\mathfrak{p}}$; this comes with an embedding $\mathcal{K}_{\mathfrak{p}} \hookrightarrow \bar{K}$.  Let $\pi \in K$ be a uniformizer of the discrete valuation ring $\mathcal{R}_{\mathfrak{p}}$.  We fix a compatible system of $N$th roots of unity $\zeta_{N} \in \bar{K}$ for $N = 1, 2, 3, ...$; that is, we require that $\zeta_{N'N}^{N'} = \zeta_{N}$ for any integers $N, N' \geq 1$.  Note that since $R$ is strictly Henselian, $\zeta_{N} \in R \subset K$ for any $N$ not divisible by $p$.  Let $G_{K, \mathfrak{p}}$ denote the absolute Galois group of $\mathcal{K}_{\mathfrak{p}}$, and let $G_{K, \mathfrak{p}}^{\tame}$ denote its tame quotient.  It follows from a special case of Abhyankar's Lemma that the maximal tamely ramified extension $\mathcal{K}_{\mathfrak{p}}^{\tame}$ is given by $\mathcal{K}_{\mathfrak{p}}(\{\pi^{1/N}\}_{(N, p) = 1})$, where $\pi^{1/N}$ denotes an $N$th root of $\pi$, and that $G_{K, \mathfrak{p}}^{\tame} \cong \widehat{\zz}^{(p')}$ is topologically generated by the automorphism which acts on $K^{\tame}$ by fixing $K$ and sending each $\pi^{1/N}$ to $\zeta_{N}\pi^{1/N}$.

We fix, once and for all, an embedding $\bar{K} \hookrightarrow \cc$ where $\zeta_{N}$ is sent to $e^{2 \pi \sqrt{-1} / N}$ for $N \geq 1$, so that we have an inclusion $\mathcal{K}_{\mathfrak{p}} \subset \cc$.  Let $d \geq 2$ be an integer and choose distinct integral elements $\alpha_{1}, ... , \alpha_{d} \in K$.  Choose polynomials $\tilde{\alpha}_{1}, ... , \tilde{\alpha}_{d} \in \cc[x]$ satisfying $\tilde{\alpha}_{i}(\pi) = \alpha_{i}$ for $1 \leq i \leq d$ and such that the $x$-adic valuation of $\tilde{\alpha}_{i}$ and $\tilde{\alpha}_{j}$ and the $\pi$-adic valuation of $\alpha_{i}$ and $\alpha_{j}$ are equal (such polynomials exist, as is shown in the discussion in \cite[\S3.3]{hasson2020prime}).  Let $\varepsilon > 0$ be a real number small enough that $\tilde{\alpha}_{i}(z) \neq \tilde{\alpha}_{j}(z)$ for all $i \neq j$ and for all $z \in B_{\varepsilon}^{*} := \{z \in \cc \ | \ |z| < \varepsilon\} \smallsetminus \{0\}$.  We define a family $\mathcal{F} \to B_{\varepsilon}^{*}$ of $d$-times-punctured Riemann spheres by letting 
$$\mathcal{F} = \proj_{\cc}^{1} \times B_{\varepsilon}^{*} \smallsetminus \bigcup_{i = 1}^{d} \{(\tilde{\alpha}_{i}(z), z) \ | \ z \in B_{\varepsilon}^{*}\}.$$
Choose a basepoint $z_{0} \in B_{\varepsilon}^{*}$.  The fundamental group $\pi_{1}(B_{\varepsilon}^{*}, z_{0})$ acts by monodromy on the fundamental group $\pi_{1}(\mathcal{F}_{z_{0}}, \infty)$ of the fiber over $z_{0}$ with basepoint $\infty$.  We write $\rho_{\tp} : \pi_{1}(B_{\varepsilon}^{*}, z_{0}) \to \Aut(\pi_{1}(\mathcal{F}_{z_{0}}, \infty))$ for this action.  The action $\rho_{\tp}$ extends uniquely to a continuous action of the profinite completion $\widehat{\pi}_{1}(B_{\varepsilon}^{*}, z_{0})$ on the profinite completion $\widehat{\pi}_{1}(\mathcal{F}_{z_{0}}, \infty)$ (see the discussion in \cite[\S1.1]{hasson2020prime}), which we also denote by $\rho_{\tp}$.  We note that $\widehat{\pi}_{1}(B_{\varepsilon}^{*}, z_{0})$ is isomorphic to $\widehat{\zz}$ and is topologically generated by the element $\delta \in \pi_{1}(B_{\varepsilon}^{*}, z_{0})$ represented by the loop given by $t \mapsto e^{2 \pi \sqrt{-1} t}z_{0}$ for $t \in [0, 1]$.

Meanwhile, the absolute Galois group $G_{K, \mathfrak{p}}$ acts naturally by conjugation on the \'{e}tale fundamental group $\pi_{1}^{\et}(\proj_{\bar{K}_{\mathfrak{p}}}^{1} \smallsetminus \{\alpha_{1}, ... , \alpha_{d}\}, \infty)$ via the $\mathcal{K}_{\mathfrak{p}}$-point lying under the geometric point $\infty : \Spec(\cc) \to \proj_{\mathcal{K}_{\mathfrak{p}}}^{1} \smallsetminus \{\alpha_{1}, ... , \alpha_{d}\}$.  After identifying $\pi_{1}^{\et}(\proj_{\bar{K}}^{1} \smallsetminus \{\alpha_{1}, ... , \alpha_{d}\}, \infty)^{(p')}$ with $\widehat{\pi}_{1}(\proj_{\cc}^{1} \smallsetminus \{\alpha_{1}, ... , \alpha_{d}\}, \infty)^{(p')}$ via Riemann's Existence Theorem and the inclusion of algebraically closed fields $\bar{\mathcal{K}}_{\mathfrak{p}} \subset \cc$, we write $\rho_{\alg} : G_{K, \mathfrak{p}} \to \Aut(\widehat{\pi}_{1}(\proj_{\cc}^{1} \smallsetminus \{\alpha_{1}, ... , \alpha_{d}\}, \infty)^{(p')})$ for this action.  We denote the actions on prime-to-$p$ quotients of \'{e}tale fundamental groups induced by $\rho_{\tp}$ and $\rho_{\alg}$ by $\rho_{\tp}^{(p')}$ and $\rho_{\alg}^{(p')}$ respectively.

For the statement of Theorem \ref{thm comparison punctured projective line}(a) below, we require the terminology of Dehn twists.  Let $\gamma : [0, 1] \to M$ be a simple loop on any complex manifold $M$; we will often identify $\gamma$ with its image in $M$.  We define the \textit{Dehn twist} on $M$ with respect to the loop $\gamma$.  It is an element of the mapping class group of $M$ represented by a self-homeomorphism of $M$ which can be visualized in terms of a small tubular neighborhood of $\gamma \subset M$, in the following way: the Dehn twist keeps the outer edge of the tubular neighborhood fixed while twisting the inner edge one full rotation counterclockwise and acts as the identity everywhere else on $M$.  Since this Dehn twist depends only on the homology class $[\gamma] \in H_{1}(M, \zz)$ of any loop $\gamma$, we will denote it by $D_{[\gamma]}$.  (See \cite[Chapter 3]{farb2011primer} for more details.)

The following theorem is a compilation of all the necessary results describing and comparing $\rho_{\tp}$ and $\rho_{\alg}$ that are proven in \cite{hasson2020prime} (Theorems 1.2 and 2.4 and Remark 4.11 of that paper).

\begin{thm} \label{thm comparison punctured projective line}

In the above situation, we have the following.

a) Let $\mathcal{I}$ be the set of all pairs $(I, n)$ where $I \subseteq \{1, ... , d\}$ is a subset and $n \geq 1$ is an integer such that $x^{n} \mid \tilde{\alpha}_{i} - \tilde{\alpha}_{j} \in \cc[x]$ for all $i, j \in I$ and such that $I$ is maximal among intervals with this property.  If $\varepsilon$ is small enough, there exist pairwise nonintersecting loops $\gamma_{I, n} : [0, 1] \to \mathcal{F}_{z_{0}} \smallsetminus \{\infty\}$ for each $(I, n) \in \mathcal{I}$ such that $\delta \in \pi_{1}(B_{\varepsilon}^{*}, z_{0})$ acts on $\pi_{1}(\mathcal{F}_{z_{0}}, \infty)$ in the same way that the product $\prod_{(I, d) \in \mathcal{I}} D_{[\gamma_{I, n}]}$ of Dehn twists on $\mathcal{F}_{z_{0}} \smallsetminus \{\infty\}$ does.  These loops $\gamma_{I, n}$ each have the property of separating the subset $\{\tilde{\alpha}_{i}(z_{0})\}_{i \in I}$ from its complement in $\{\tilde{\alpha}_{j}(z_{0})\}_{j = 1}^{d} \cup \{\infty\}$, and two such loops $\gamma_{I, n}$ and $\gamma_{I', n'}$ are homologous if and only if $I = I'$.

b) The actions $\rho_{\tp}^{(p')}$ and $\rho_{\alg}^{(p')}$ factor through $\pi_{1}(B_{\varepsilon}^{*}, z_{0})^{(p')}$ and $G_{K, \mathfrak{p}}^{\tame}$ respectively.

c) We have isomorphisms $\widehat{\pi}_{1}(B_{\varepsilon}^{*}, z_{0})^{(p')} \stackrel{\sim}{\to} G_{K, \mathfrak{p}}^{\tame}$ and $\phi : \widehat{\pi}_{1}(\mathcal{F}_{z_{0}}, \infty)^{(p')} \stackrel{\sim}{\to} \widehat{\pi}_{1}(\proj_{\cc}^{1} \smallsetminus \{\alpha_{1}, ... , \alpha_{d}\}, \infty)^{(p')}$ inducing an isomorphism of the actions $\rho_{\tp}^{(p')}$ and $\rho_{\alg}^{(p')}$.  Moreover, we can choose the isomorphism $\phi$ so that it takes any element represented by a loop on $\mathcal{F}_{z_{0}}$ separating some singleton $\{\tilde{\alpha}_{i}(z_{0})\}$ from its complement in $\{\tilde{\alpha}_{j}(z_{0})\}_{j = 1}^{d} \cup \{\infty\}$ to an element represented by a loop on $\proj_{\cc}^{1} \smallsetminus \{\alpha_{1}, ... , \alpha_{d}\}$ separating the singleton $\{\alpha_{i}\}$ from its complement in $\{\alpha_{j}\}_{j = 1}^{d} \cup \{\infty\}$.

\end{thm}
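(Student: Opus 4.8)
The plan is to deduce the three parts of the theorem from the corresponding results of \cite{hasson2017prime} (Theorems 1.2 and 2.3 and Remark 3.10 of that paper), after checking that the data set up above --- the family $\mathcal{F} \to B_{\varepsilon}^{*}$ of $d$-punctured spheres built from the polynomials $\tilde{\alpha}_{i}$, the arithmetic curve $\proj^{1}_{\mathcal{K}_{\mathfrak{p}}} \smallsetminus \{\alpha_{1}, \ldots, \alpha_{d}\}$, and the identifications coming from Riemann's Existence Theorem and the embedding $\bar{\mathcal{K}}_{\mathfrak{p}} \subset \cc$ --- matches the one treated there. Part (a) is the purely topological statement describing the degeneration of $\mathcal{F}$ as $z \to 0$; part (b) records that passing to prime-to-$p$ quotients makes both actions factor through the appropriate $\widehat{\zz}^{(p')}$-quotients of their sources; and part (c) is the comparison of the two resulting actions together with the compatibility with the combinatorics of the punctures.

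For part (a), if I were to argue from scratch I would study the monodromy of $\mathcal{F} \to B_{\varepsilon}^{*}$ around the puncture $z = 0$. As $z \to 0$ the punctures $\tilde{\alpha}_{i}(z)$ collide, and the condition $x^{n} \mid \tilde{\alpha}_{i} - \tilde{\alpha}_{j}$ records precisely the order of vanishing of $\tilde{\alpha}_{i}(z) - \tilde{\alpha}_{j}(z)$; the resulting nested collision pattern is exactly the combinatorial data packaged in the set $\mathcal{I}$. A local model of the configuration space of $d$ points near such an iterated collision --- an iterated version of the Picard--Lefschetz computation, in which a single pair of colliding points contributes one Dehn twist --- shows that $\rho_{\tp}(\delta)$ is the product of the Dehn twists $D_{[\gamma_{I,n}]}$ along loops $\gamma_{I,n}$, each bounding a small disc that isolates precisely the cluster $\{\tilde{\alpha}_{i}(z_{0})\}_{i \in I}$. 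That the $\gamma_{I,n}$ may be taken pairwise nonintersecting, and that $\gamma_{I,n}$ and $\gamma_{I',n'}$ are homologous exactly when $I = I'$, is then bookkeeping about nested clusters; the one point requiring care is the linear order on $\{1, \ldots, d\}$ implicit in the word ``interval,'' which has to be chosen compatibly with the embedding into $\cc$, and this is handled in \cite[\S3]{hasson2017prime}.

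For parts (b) and (c), on the arithmetic side one needs that $\rho_{\alg}^{(p')}$ factors through $G_{K,\mathfrak{p}}^{\tame}$, i.e.\ that the wild inertia of $\mathcal{K}_{\mathfrak{p}}$ acts trivially on $\pi_{1}^{\et}(\proj^{1}_{\bar{\mathcal{K}}_{\mathfrak{p}}} \smallsetminus \{\alpha_{1}, \ldots, \alpha_{d}\}, \infty)^{(p')}$; this is the point at which the hypothesis $p \neq 2$ (and $p \neq \ell$ in the later $\ell$-adic applications) enters, and it is established in \cite{hasson2017prime} using the theory of tame fundamental groups of curves over a Henselian trait. On the topological side, the analogous and easier statement gives that $\rho_{\tp}^{(p')}$ factors through $\widehat{\pi}_{1}(B_{\varepsilon}^{*}, z_{0})^{(p')}$. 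Both $\widehat{\pi}_{1}(B_{\varepsilon}^{*}, z_{0})^{(p')}$ and $G_{K,\mathfrak{p}}^{\tame}$ are isomorphic to $\widehat{\zz}^{(p')}$, and one matches the generator $\delta$ of the first with the canonical tame generator of the second, namely the automorphism sending $\pi^{1/N} \mapsto \zeta_{N}\pi^{1/N}$. The heart of part (c) is then to produce an isomorphism $\phi$ between the two prime-to-$p$ fundamental groups intertwining $\rho_{\tp}^{(p')}$ and $\rho_{\alg}^{(p')}$ under this identification; this is obtained in \cite{hasson2017prime} by comparing the algebraic degeneration of $\proj^{1}_{\mathcal{R}_{\mathfrak{p}}} \smallsetminus \{\alpha_{1}, \ldots, \alpha_{d}\}$ with the complex-analytic family $\mathcal{F}$, the ``spreading out'' of the roots via the polynomials $\tilde{\alpha}_{i}$ being the bridge between the two pictures, while the further clause that $\phi$ can be chosen to send a loop separating a singleton $\{\tilde{\alpha}_{i}(z_{0})\}$ to one separating $\{\alpha_{i}\}$ is recorded in \cite[Remark 3.10]{hasson2017prime}.

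The step I expect to be the main obstacle --- were one to prove this without quoting \cite{hasson2017prime} --- is part (c): it is not enough to match the two pro-(prime-to-$p$) free groups abstractly, one must do so by an isomorphism that is \emph{simultaneously} equivariant for the two monodromy/Galois actions and compatible with the labelling of the punctures, and carrying this compatibility through the degeneration is precisely the delicate part of \cite{hasson2017prime}. Parts (a) and (b) are by comparison essentially local --- a Picard--Lefschetz-type computation and a standard tameness statement --- and routine once the framework is in place.
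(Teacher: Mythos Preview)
Your proposal is correct and matches the paper exactly: the paper does not give its own proof of this theorem but simply states it as ``a compilation of all the necessary results describing and comparing $\rho_{\tp}$ and $\rho_{\alg}$ that are proven in \cite{hasson2017prime} (Theorems 1.2 and 2.3 and Remark 3.10 of that paper),'' which is precisely the citation you invoke. Your additional sketch of how one would argue parts (a)--(c) from scratch goes beyond what the paper provides, but is accurate as commentary.
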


In other words, the prime-to-$p$ monodromy action $\rho_{\tp}^{(p')}$ can be described in terms of Dehn twists with respect to loops surrounding certain subsets of the removed points which coalesce at a certain rate as one approaches the center of $B_{\varepsilon}^{*}$; moreover, this is isomorphic to the algebraic action $\rho_{\alg}^{(p')}$ via an isomorphism of prime-to-$p$ \'{e}tale fundamental groups which takes the image of a loop wrapping around a given $a_{i}(z_{0})$ to the image of a loop wrapping around $\alpha_{i}$.

We now want to relate this to the action of $G_{K, \mathfrak{p}}$ on the prime-to-$p$ \'{e}tale fundamental group of a smooth hyperelliptic curve over $\mathcal{K}_{\mathfrak{p}}$.  Let $\mathfrak{C}$ be a smooth, projective hyperelliptic curve over $\cc$ of degree $d$ and genus $g$ defined by an equation of the form $y^{2} = \prod_{i = 1}^{d} (x - z_{i})$ for distinct roots $z_{i} \in \mathcal{K}_{\mathfrak{p}}$; if $d$ is odd (resp. even), then $d = 2g + 1$ (resp. $d = 2g + 2$).  The hyperelliptic curve $\mathfrak{C}$ comes with a surjective degree-$2$ morphism $\mathfrak{C} \to \proj_{\cc}^{1}$ defined by projecting onto the $x$-coordinate.  It is well known that this projection ramifies at $\infty$ if and only if $d = 2g + 1$; in this case, we write $z_{2g + 2} = \infty$.  Then the projection is ramified at exactly the $(2g + 2)$-element set of $z_{i}$'s.  We denote this subset by $\mathfrak{B} \subset \proj_{\cc}^{1}$, and by abuse of notation we also write $\mathfrak{B} \subset \mathfrak{C}(\cc)$ for the set of inverse images of these ramification points ($\mathfrak{B}$ is the set of \textit{branch points} of $\mathfrak{C}$).  Clearly the restriction to $\mathfrak{C} \smallsetminus \mathfrak{B}$ of the above projection map yields a finite degree-$2$ \'{e}tale morphism $\mathfrak{C} \smallsetminus \mathfrak{B} \to \proj_{\cc}^{1} \smallsetminus \mathfrak{B}$.  Choose a basepoint $P$ of the topological space $\proj_{\cc}^{1} \smallsetminus \mathfrak{B}$ and a basepoint $Q$ of the topological space $\mathfrak{C}(\cc) \smallsetminus \mathfrak{B}$ such that $Q$ lies in the inverse image of $P$.  Then after making identifications via Riemann's Existence Theorem, we get an inclusion and surjections 
\begin{equation} \label{eq maps of fundamental groups} \widehat{\pi}_{1}(\proj_{\cc}^{1} \smallsetminus \mathfrak{B}, P) \rhd \widehat{\pi}_{1}(\mathfrak{C}(\cc) \smallsetminus \mathfrak{B}, Q) \twoheadrightarrow \widehat{\pi}_{1}(\mathfrak{C}(\cc), Q) \twoheadrightarrow H_{1}(\mathfrak{C}(\cc), \zz) \otimes \widehat{\zz} \end{equation}
 induced by the maps $\proj_{\cc}^{1} \smallsetminus \mathfrak{B} \leftarrow \mathfrak{C}(\cc) \smallsetminus \mathfrak{B} \hookrightarrow \mathfrak{C}(\cc)$ and by identifying the first singular homology group of $\mathfrak{C}(\cc)$ with the abelianization of its fundamental group.  The inclusion in (\ref{eq maps of fundamental groups}) is an inclusion of a characteristic subgroup, so any automorphism of $\widehat{\pi}_{1}(\proj_{\cc}^{1} \smallsetminus \mathfrak{B}, P)$ induces an automorphism of $\widehat{\pi}_{1}(\mathfrak{C}(\cc) \smallsetminus \mathfrak{B}, P)$.

We write $\mathfrak{J}$ for the Jacobian of $\mathfrak{C}$.  There is a well-known identification of $H_{1}(\mathfrak{C}(\cc), \zz) \otimes \zz_{\ell}$ with $H_{1}(\mathfrak{J}(\cc), \zz) \otimes \zz_{\ell}$ and in turn with $T_{\ell}(\mathfrak{J})$ for any prime $\ell$.  Moreover, the intersection pairing on $H_{1}(\mathfrak{C}(\cc), \zz)$ defined above carries over to the canonical Riemann form on the complex abelian variety $\mathfrak{J}(\cc)$ and in turn to the Weil pairing $e_{\ell}$ on $T_{\ell}(\mathfrak{J})$ (see the results in \cite[\S IV.4 and \S VIII.1]{lang2012introduction} and in \cite[\S24]{mumford1974abelian}).  Given an element $c \in H_{1}(\mathfrak{C}(\cc), \zz)$, we also write $c$ for the element $c \otimes 1 \in H_{1}(\mathfrak{C}(\cc), \zz) \otimes \zz_{\ell} = T_{\ell}(\mathfrak{J})$.  It is not difficult to show that the action of $G_{K, \mathfrak{p}}$ on $T_{\ell}(\mathfrak{J})$ induced by $\rho_{\alg}^{(p')}$ via these identifications is the natural $\ell$-adic Galois action $\rho_{\ell}$; see, for instance, step 5 of the proof of \cite[Proposition 2.2]{yelton2015images}.

\begin{dfn} \label{dfn symplectic basis}

Given any complex hyperelliptic curve $\mathfrak{C}$ as above, we define the following objects.

a) For any $c \in H_{1}(\mathfrak{C}(\cc), \zz)$, the \textit{transvection} with respect to $c$, denoted $t_{c} \in \Aut(H_{1}(\mathfrak{C}(\cc), \zz))$, is the automorphism given by $v \mapsto v + \langle v, c \rangle c$.  As above, we may identify $c$ with its image in $T_{\ell}(\mathfrak{J})$, and then $t_{c}$ is identified with the automorphism of $T_{\ell}(\mathfrak{J})$ given by $v \mapsto v + e_{\ell}(v, c) c$.

b) A \textit{sympectic basis} of $H_{1}(\mathfrak{C}(\cc), \zz)$ is an ordered basis $\{a_{1}', ... , a_{g}', b_{1}', ... , b_{g}'\}$ satisfying the following properties:

\ \ \ (i) each $a_{i}'$ (resp. each $b_{i}'$) is represented by a loop on $\mathfrak{C}(\cc)$ whose image in $\proj_{\cc}^{1}$ separates $\{z_{2i - 1}, z_{2i}\}$ (resp. $\{z_{2i}, ... , z_{2g + 1}\}$) from its complement in $\{z_{j}\}_{j = 1}^{2g + 2}$; and 

\ \ \ (ii) the (skew-symmetric) intersection pairing $\langle \cdot, \cdot \rangle$ on $H_{1}(\mathfrak{C}(\cc), \zz)$ is determined by $\langle a_{i}', b_{i}' \rangle = -1$ for $1 \leq i \leq g$ and $\langle a_{i}', a_{j}' \rangle = \langle b_{i}', b_{j}' \rangle = \langle a_{i}', b_{j}' \rangle = 0$ for $1 \leq i < j \leq g$.  (See \cite[Figure 6.1]{farb2011primer} or Figure \ref{homology1} below.)

\end{dfn}

\begin{figure}
\caption{Image in $\proj_{\cc}^{1}$ of a symplectic basis of $H_{1}(C(\cc), \zz)$ in the $g = 2$ case} \label{homology1}
\includegraphics[scale=0.25]{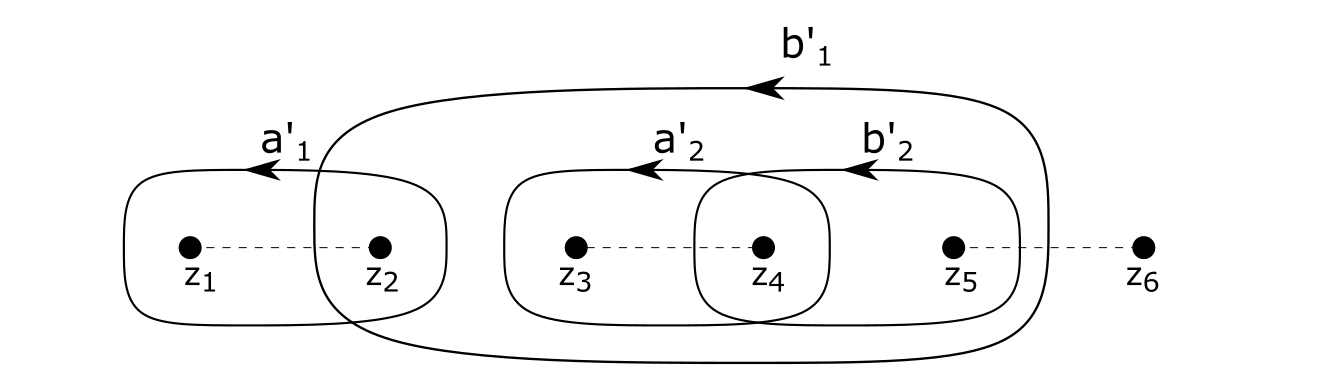}
\end{figure}

We note that a transvection in $\Aut(H_{1}(\mathfrak{C}(\cc), \zz))$ respects the intersection pairing $\langle \cdot, \cdot \rangle$; similarly, a transvection in $\Aut(T_{\ell}(\mathfrak{J}))$ respects the Weil pairing $e_{\ell}$ and thus lies in $\Sp(T_{\ell}(\mathfrak{J}))$.

From now on, given a complex hyperelliptic curve $\mathfrak{C}$, we fix a symplectic basis $\{a_{1}', ... , a_{g}', b_{1}', ... , b_{g}'\}$ of $H_{1}(\mathfrak{C}(\cc), \zz)$.  In order to prove Proposition \ref{prop Galois action local} below, we require several lemmas, the first two of which are purely topological.

\begin{lemma} \label{lemma even partitions}

Via the relations given in (\ref{eq maps of fundamental groups}), there is an identification between the homology group $H_{1}(\mathfrak{C}(\cc), \zz / 2\zz)$ and the group of partitions of $\mathfrak{B}$ into $2$ even-cardinality subsets (where the addition law is induced by the symmetric difference operation).

\end{lemma}

\begin{proof}

The essential ideas of this argument are contained in the proof of \cite[Lemma 8.12]{mumford1984tata}, where the assertion is mentioned in a footnote.  The maximal abelian exponent-$2$ quotient of $\pi_{1}(\proj_{\cc}^{1} \smallsetminus \mathfrak{B}, P)$ is isomorphic to $(\zz / 2\zz)^{2g + 1}$ and is identified with the group of partitions of $\mathfrak{B}$ into two subsets (where the addition law is induced by taking symmetric differences), by sending the homology class of any loop $\gamma \subset \proj_{\cc}^{1}$ to the subsets of $\mathfrak{B}$ contained in each connected component of $\proj_{\cc}^{1} \smallsetminus \gamma$.  We have a homomorphism of homology groups (composed with reduction modulo $2$) $H_{1}(\mathfrak{C}(\cc) \smallsetminus \mathfrak{B}, \zz) \to H_{1}(\proj_{\cc}^{1} \smallsetminus \mathfrak{B}, \zz / 2\zz)$ coming from the inclusion of fundamental groups in (\ref{eq maps of fundamental groups}).  By \cite[Lemma 1]{arnold1968remark}, this homomorphism factors through the surjection $H_{1}(\mathfrak{C}(\cc) \smallsetminus \mathfrak{B}, \zz) \to H_{1}(\mathfrak{C}(\cc), \zz/2\zz)$ induced by the map $\mathfrak{C}(\cc) \smallsetminus \mathfrak{B} \hookrightarrow \mathfrak{C}(\cc)$ (composed with reduction modulo $2$).  It is clear from this that we have an inclusion of $H_{1}(\mathfrak{C}(\cc), \zz / 2\zz)$ as the subgroup of $H_{1}(\proj_{\cc}^{1} \smallsetminus \{z_{1}, ... , z_{2g + 2}\}, \zz / 2\zz)$ which is generated by the partitions $\bar{a}_{i}' := \{z_{2i - 1}, z_{2i}\} \sqcup \mathfrak{B} \smallsetminus \{z_{2i - 1}, z_{2i}\}$ (the image of $a_{i}'$) and $\bar{b}_{i}' := \{z_{2i}, ... , z_{2g + 1}\} \sqcup \mathfrak{B} \smallsetminus \{z_{2i}, ... , z_{2g + 1}\}$ (the image of $b_{i}'$) for $1 \leq i \leq g$.  Now we have $\{z_{2i - 1}, z_{2g + 2}\} \sqcup \mathfrak{B} \smallsetminus \{z_{2i - 1}, z_{2g + 2}\} = \sum_{j = 1}^{i - 1} \bar{a}_{j}' + \sum_{j = i}^{g} \bar{b}_{j}'$ for $1 \leq i \leq g$ and $\{z_{2i}, z_{2g + 2}\} \sqcup \mathfrak{B} \smallsetminus \{z_{2i}, z_{2g + 2}\} = \sum_{j = 1}^{i} \bar{a}_{j}' + \sum_{j = i}^{g} \bar{b}_{j}'$ for $1 \leq i \leq g - 1$. The assertion now follows from checking that given any even-cardinality subset $T \subset \mathfrak{B} \smallsetminus \{z_{2g + 2}\}$, the partition $T \sqcup \mathfrak{B} \smallsetminus T$ is equal to $\sum_{z_{i} \in T} (\{z_{i}, z_{2g + 2}\} \sqcup \mathfrak{B} \smallsetminus \{z_{i}, z_{2g + 2}\})$.

\end{proof}

\begin{lemma} \label{lemma lifts to Dehn twists}

Assume all of the above notation.

a) Let $\gamma \subset \proj_{\cc}^{1}$ be the image of a simple closed loop on $\proj_{\cc}^{1} \smallsetminus \mathfrak{B}$ which separates the set of $\alpha_{i}$'s into two even-cardinality subsets.  Then the inverse image of $\gamma$ under the ramified degree-$2$ covering map $\mathfrak{C}(\cc) \to \proj_{\cc}$ consists of two connected components, each of which are simple closed loops whose homology classes $\pm c \in H_{1}(\mathfrak{C}(\cc), \zz)$ differ by sign.

As a particular case, if $\gamma_{i} \subset \proj_{\cc}^{1}$ is the image of a loop which separates $\{z_{i}, z_{2g + 1}\}$ from its complement in $\mathfrak{B}$ for some $i$, then the homology classes of the simple closed loops on $\mathfrak{C}(\cc)$ lying above $\gamma_{i}$ are given by $\pm c_{i}'$ for some element $c_{i}' \in H_{1}(\mathfrak{C}(\cc), \zz)$ which is equivalent modulo $2$ to 
\begin{equation} \label {eq c_i} \begin{cases} a_{(i + 1)/2}' + ... + a_{g}' + b_{(i + 1)/2}' & i \ \mathrm{odd}\\ a_{i/2 + 1}' + ... + a_{g}' + b_{i/2}' & i \ \mathrm{even} \end{cases} \end{equation}

b) With $\gamma$ and $c$ as above, the Dehn twist $D_{[\gamma]}$ induces an automorphism of $H_{1}(\mathfrak{C}(\cc), \zz)$ via the inclusion and quotient maps in (\ref{eq maps of fundamental groups}), which is given by $t_{c}^{2} \in \Aut(H_{1}(\mathfrak{C}(\cc), \zz))$.

\end{lemma}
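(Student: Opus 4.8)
The plan is to establish part (a) by a covering-space argument and then deduce part (b) from the standard formula expressing a Dehn twist action on homology in terms of the homology class of the twisting loop. For part (a), the key observation is that $\proj_{\cc}^{1} \smallsetminus \{z_{1}, \ldots, z_{2g+2}\}$ has $\pi_{1}$ generated by small loops $x_{i}$ around each $z_{i}$ (with the single relation $\prod x_{i} = 1$), and the degree-$2$ cover $\mathfrak{C}(\cc) \smallsetminus \mathfrak{B} \to \proj_{\cc}^{1} \smallsetminus \{z_{1}, \ldots, z_{2g+2}\}$ corresponds to the index-$2$ subgroup $H$ consisting of words in which the total exponent sum is even (equivalently, the kernel of the mod-$2$ total-winding-number homomorphism). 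A loop $\gamma$ separating the branch points into two even-cardinality subsets has total winding number $\equiv 0 \pmod 2$ around the $z_i$, so $[\gamma] \in H$; hence $\gamma$ lifts to a \emph{closed} loop upstairs, and since the cover has degree $2$, the preimage consists of exactly two connected components, each mapping homeomorphically onto $\gamma$. The deck transformation (the hyperelliptic involution) interchanges these two components and acts as $-1$ on $H_{1}(\mathfrak{C}(\cc), \zz)$, so their homology classes are $\pm c$ for a single class $c$; I would then point to the standard picture (as in the cited figures in \cite{arnold1968remark} or \cite{farb2011primer}) to pin down which class $c_i'$ arises in the particular case, by tracking how the lift of a loop enclosing $\{z_i, z_{2g+1}\}$ crosses the standard branch cuts defining the symplectic basis $\{a_j', b_j'\}$, giving the mod-$2$ expression in \eqref{eq c_i}. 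The parity count is what matters for the lemma's later use, so I would only need the class modulo $2$.

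For part (b), I would invoke the classical fact (see \cite[Proposition 3.1 or Ch.~3]{farb2011primer}) that for a simple closed curve $\alpha$ on an oriented surface, the Dehn twist $D_{\alpha}$ acts on $H_1$ by $v \mapsto v + \langle v, [\alpha]\rangle [\alpha] = T_{[\alpha]}(v)$. The subtlety is that $\gamma$ lives downstairs on $\proj_{\cc}^{1}$, not on $\mathfrak{C}(\cc)$; the Dehn twist $D_{[\gamma]}$ acts first on $\widehat{\pi}_1(\proj_{\cc}^1 \smallsetminus \{\alpha_i\}, P)$ and then, via the characteristic-subgroup inclusion in \eqref{eq maps of fundamental groups}, on $\widehat{\pi}_1(\mathfrak{C}(\cc) \smallsetminus \mathfrak{B})$ and its abelianization $H_1(\mathfrak{C}(\cc), \zz)$. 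The point is that the mapping class $D_{[\gamma]}$ of the punctured sphere lifts, along the double cover, to the \emph{product} of the two Dehn twists $D_{c} D_{-c}$ along the two preimage loops (this is the standard behavior of Dehn twists along a curve whose preimage splits — a twist along $\gamma$, supported in an annular neighborhood of $\gamma$, pulls back to twists along each annular preimage component); since the two preimage loops are disjoint and homologous up to sign, and a twist along a loop in class $c$ and one along a loop in class $-c$ both act on homology as $T_c$, the composite acts as $T_c \circ T_c = T_c^2$. I would spell this out by writing the action of $D_{[\gamma]}$ on a lift of a test loop and checking the homology class changes by $2\langle v, c\rangle c$.

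The main obstacle I anticipate is part (b): making precise the claim that a Dehn twist downstairs pulls back to the product of Dehn twists along the preimage components, and in particular getting the exponent exactly $2$ (rather than $1$) — this is really a statement about the local model near the annulus, where the degree-$2$ cover restricted to a tubular neighborhood of $\gamma$ is a trivial double cover (two disjoint annuli), so the single twist downstairs corresponds to one twist in each sheet, and these compose to $T_c^2$ because both sheets contribute the same class $\pm c$ to $H_1$. I would take care to justify that the cover is trivial over the annulus (it is, since $\gamma$ lifts to a closed loop, so the annulus is not in the branch locus and the restricted cover is a disjoint union of two annuli), which is exactly the content established in part (a). Part (a) itself is routine once the winding-number/parity dictionary is set up; the identification of the precise class in \eqref{eq c_i} is a bookkeeping exercise with the standard symplectic-basis picture and I would not belabor it beyond citing the relevant figures.
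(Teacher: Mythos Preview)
Your proposal is correct and follows essentially the same approach as the paper: for (a), both use the covering-space criterion (the double cover corresponds to the mod-$2$ total exponent map, so an even-cardinality separating loop lifts), the hyperelliptic involution acting as $-1$ on $H_1$ to get the $\pm c$ statement, and a bookkeeping step for the explicit mod-$2$ class; for (b), both lift $D_{[\gamma]}$ to the product $D_c D_{-c}=D_c^2$ of Dehn twists along the two preimage annuli and then invoke the standard Dehn-twist/transvection formula from \cite{farb2011primer}. The only cosmetic difference is that the paper computes the class $c_i'$ mod $2$ via the partition/symmetric-difference dictionary on $H_1(\proj_{\cc}^1\smallsetminus\{z_j\},\zz/2\zz)$ (citing \cite{arnold1968remark}), whereas you propose tracking intersections with branch cuts; the partition argument is slightly cleaner since it avoids any actual picture-drawing, but both amount to the same verification.
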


\begin{proof}

It follows from Lemma \ref{lemma even partitions} and its proof that if $\gamma$ is any loop in $\pi_{1}(\proj_{\cc}^{1} \smallsetminus \mathfrak{B}, P)$ which seperates the $z_{i}$'s into $2$ subsets of even cardinality, then $\gamma$ lifts to an element $c \in \pi_{1}(\mathfrak{C}(\cc) \smallsetminus \mathfrak{B}, Q)$ whose image in $H_{1}(\mathfrak{C}(\cc), \zz)$ we also denote by $c$. The only other choice of loop on $\mathfrak{C}(\cc) \smallsetminus \mathfrak{B}$ whose image in $\proj_{\cc}^{1} \smallsetminus \mathfrak{B}$ is $\gamma$ must be based at $\iota(Q)$ and equal to the composition of the path $c$ with $\iota$, where $\iota : \mathfrak{C}(\cc) \smallsetminus \mathfrak{B} \to \mathfrak{C}(\cc) \smallsetminus \mathfrak{B}$ is the only nontrivial deck transformation.  Since $\iota$ acts on the homology group by sign change (see \cite[\S7.4]{farb2011primer}), this loop must be $-c$.

If the image of $\gamma$ modulo $2$ is the partition of $\{z_{i}, z_{2g + 1}\}$ and its complement, then the desired statement in (a) follows by the straightforward verification that this partition is equal to the symmetric sum of the partitions corresponding to the basis elements $a_{i}'$ and $b_{i}'$ appearing in the formulas given in (\ref{eq c_i}).  Thus, (a) is proved.

Consider a self-homeomorphism of $\mathfrak{C}(\cc) \smallsetminus \mathfrak{B}$ fixing $Q$ representing the composition of (commuting) Dehn twists on $\mathfrak{C}(\cc) \smallsetminus (\mathfrak{B} \cup \{Q\})$ with respect to the two lifts $\pm c$ of $\gamma$ on $\mathfrak{C}(\cc) \smallsetminus (\mathfrak{B} \cup \{Q\})$; since Dehn twists do not depend on the orientation of loops, this composition of Dehn twists is equal to $D_{c}^{2}$.  Note that such a self-homeomorphism of $\mathfrak{C}(\cc) \smallsetminus \mathfrak{B}$ is (up to deformation) a lifting of a self-homeomorphism of $\proj_{\cc}^{1} \smallsetminus \mathfrak{B}$ fixing $P$ which represents $D_{[\gamma]}$.  Now it follows from the functoriality of taking fundamental groups that the induced automorphism of $\pi_{1}(\mathfrak{C}(\cc) \smallsetminus \mathfrak{B}, Q)$ is the restriction of the automorphism $\varphi(D_{[\gamma]}) \in \Aut(\pi_{1}(\proj_{\cc}^{1} \smallsetminus \mathfrak{B}, P))$.  Since any self-homeomorphism of $\mathfrak{C}(\cc) \smallsetminus \mathfrak{B}$ sends the image of a loop wrapping around a single point in $\mathfrak{B}$ to the image of a loop wrapping around a single point in $\mathfrak{B}$ and the kernel of the quotient map $\pi_{1}(\mathfrak{C}(\cc) \smallsetminus \mathfrak{B}, Q) \twoheadrightarrow \pi_{1}(\mathfrak{C}(\cc), Q)$ is generated by squares of homotopy classes of such loops, this restriction of $\varphi(D_{[\gamma]})$ stabilizes the kernel and induces an automorphism of $\pi_{1}(\mathfrak{C}(\cc), Q)$.  Thus, $D_{[\gamma]}$ induces an automorphism of $H_{1}(\mathfrak{C}(\cc), \zz)$ via the maps in (\ref{eq maps of fundamental groups}), as claimed, and this automorphism is the one induced by the element $D_{c}^{2}$ of the mapping class group of $\mathfrak{C}(\cc)$.  Now (b) follows from the well-known fact (see \cite[Proposition 6.3]{farb2011primer}) that the Dehn twist with respect to any loop on a compact smooth manifold acts on homology as the transvection with respect to that loop.

\end{proof}

\begin{lemma} \label{lemma branch point infty}

Let $\alpha_{1}, ... , \alpha_{d}$ be distinct elements in $\mathcal{R}_{\mathfrak{p}}$.  There exist elements $\alpha_{1}', ... , \alpha_{d + 1}' \in \mathcal{R}_{\mathfrak{p}}$ satisfying the following: 

(i) The elements $\alpha_{i}' - \alpha_{j}'$ and $\alpha_{i} - \alpha_{j}$ have equal valuation for $1 \leq i < j \leq d$, and $\alpha_{d + 1}' - \alpha_{i}' \in \mathcal{R}_{\mathfrak{p}}^{\times}$ for $1 \leq i \leq d$.

(ii) Let $\alpha_{d + 1} = \infty \in \proj_{\mathcal{K}_{\mathfrak{p}}}^{1}$.  There is a $\mathcal{K}_{\mathfrak{p}}$-isomorphism 
$$\psi : \proj_{\bar{\mathcal{K}}_{\mathfrak{p}}}^{1} \smallsetminus \{\alpha_{1}, ... , \alpha_{d + 1}\} \stackrel{\sim}{\to} \proj_{\bar{\mathcal{K}}_{\mathfrak{p}}}^{1} \smallsetminus \{\alpha_{1}', ... , \alpha_{d + 1}'\}$$
 such that the induced isomorphism $\psi_{*} : \widehat{\pi}_{1}(\proj_{\cc}^{1} \smallsetminus \{\alpha_{1}, ... , \alpha_{d + 1}\}, \psi^{-1}(\infty)) \stackrel{\sim}{\to} \widehat{\pi}_{1}(\proj_{\cc}^{1} \smallsetminus \{\alpha_{1}', ... , \alpha_{d + 1}'\}, \infty)$ yields an isomorphism of $\rho_{\alg}$ with the analogously defined representation $\rho_{\alg}'$.  The isomorphism $\psi_{*}$ takes any element represented by a loop on $\proj_{\cc}^{1} \smallsetminus \{\alpha_{1}, ... , \alpha_{d + 1}\}$ separating some singleton $\{\alpha_{i}\}$ from its complement in $\{\alpha_{j}\}_{j = 1}^{d + 1}$ to an element represented by a loop on $\proj_{\cc}^{1} \smallsetminus \{\alpha_{1}', ... , \alpha_{d + 1}'\}$ separating the singleton $\{\alpha_{i}'\}$ from its complement in $\{\alpha_{j}'\}_{j = 1}^{d + 1}$.

\end{lemma}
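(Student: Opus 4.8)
The plan is to exhibit $\psi$ as an explicit Möbius transformation that moves the point $\infty$ to a finite position while leaving the mod-$\mathfrak{p}$ configuration of $\alpha_{1}, \dots, \alpha_{d}$ unchanged. Since $\mathcal{R}_{\mathfrak{p}}$ is strictly Henselian, its residue field is the separable (hence algebraic) closure of the finite residue field of $\mathcal{O}_{K, \mathfrak{p}}$ and is in particular infinite, so we may choose $\beta \in \mathcal{R}_{\mathfrak{p}}$ whose image in the residue field differs from that of every $\alpha_{i}$; equivalently, $v(\alpha_{i} - \beta) = 0$ for $1 \leq i \leq d$. Let $\psi$ be the automorphism $x \mapsto (x - \beta)^{-1}$ of $\proj_{\bar{\mathcal{K}}_{\mathfrak{p}}}^{1}$; it is defined over $\mathcal{K}_{\mathfrak{p}}$ (in fact over $\mathcal{R}_{\mathfrak{p}}$, since the associated $2 \times 2$ matrix has unit determinant). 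Put $\alpha_{i}' := \psi(\alpha_{i}) = (\alpha_{i} - \beta)^{-1} \in \mathcal{R}_{\mathfrak{p}}^{\times}$ for $1 \leq i \leq d$ and $\alpha_{d + 1}' := \psi(\infty) = 0$; these are pairwise distinct because $\alpha_{1}, \dots, \alpha_{d}, \infty$ are, so $\psi$ restricts to an isomorphism $\proj_{\bar{\mathcal{K}}_{\mathfrak{p}}}^{1} \smallsetminus \{\alpha_{1}, \dots, \alpha_{d + 1}\} \stackrel{\sim}{\to} \proj_{\bar{\mathcal{K}}_{\mathfrak{p}}}^{1} \smallsetminus \{\alpha_{1}', \dots, \alpha_{d + 1}'\}$.

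For (i): from the identity $\alpha_{i}' - \alpha_{j}' = (\alpha_{j} - \alpha_{i})/((\alpha_{i} - \beta)(\alpha_{j} - \beta))$ together with $v(\alpha_{i} - \beta) = v(\alpha_{j} - \beta) = 0$ one gets $v(\alpha_{i}' - \alpha_{j}') = v(\alpha_{i} - \alpha_{j})$ for $1 \leq i < j \leq d$, while $\alpha_{d + 1}' - \alpha_{i}' = -(\alpha_{i} - \beta)^{-1}$ has valuation $0$, so $\alpha_{d + 1}' - \alpha_{i}' \in \mathcal{R}_{\mathfrak{p}}^{\times}$.

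For (ii): $\psi$ is an isomorphism of $\mathcal{K}_{\mathfrak{p}}$-schemes and hence is equivariant for the natural $G_{K, \mathfrak{p}}$-actions on both punctured lines; by functoriality of the étale fundamental group — with the analytic identifications furnished by Riemann's Existence Theorem being compatible, since $\psi$ is algebraic over $\mathcal{K}_{\mathfrak{p}} \subset \cc$ and so induces a biholomorphism of complex points — the resulting isomorphism $\psi_{*}$ on profinite fundamental groups (hence on their characteristic, e.g. prime-to-$p$, quotients), taken with source basepoint $\psi^{-1}(\infty) = \beta$ and target basepoint $\infty$, intertwines $\rho_{\alg}$ with $\rho_{\alg}'$. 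Here $\beta$ is a $\mathcal{K}_{\mathfrak{p}}$-rational point of $\proj_{\cc}^{1} \smallsetminus \{\alpha_{1}, \dots, \alpha_{d + 1}\}$ (as $\beta \neq \alpha_{i}$ for all $i$), so it is a legitimate choice of basepoint and the resulting representation agrees up to isomorphism with the one defined using any other $\mathcal{K}_{\mathfrak{p}}$-rational basepoint. Finally, $\psi$ is a homeomorphism of $\proj_{\cc}^{1}$ sending $\alpha_{i} \mapsto \alpha_{i}'$ for $1 \leq i \leq d$ and $\infty \mapsto \alpha_{d + 1}'$, so it carries any loop whose image separates a singleton $\{\alpha_{i}\}$ from the complement of that singleton in $\{\alpha_{j}\}_{j = 1}^{d + 1}$ to a loop whose image separates $\{\alpha_{i}'\}$ from the complement of $\{\alpha_{i}'\}$ in $\{\alpha_{j}'\}_{j = 1}^{d + 1}$; this is the last assertion.

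Once $\psi$ is chosen the proof is essentially formal, so there is no real obstacle. The one genuine input is the availability of $\beta$, which is exactly where the strict Henselian hypothesis on $\mathcal{R}_{\mathfrak{p}}$ is used: over $\mathcal{O}_{K, \mathfrak{p}}$, whose residue field is finite and may have fewer than $d + 1$ elements, such a $\beta$ need not be found in $K$. Everything else is routine manipulation of Möbius transformations together with the functoriality of $\pi_{1}^{\et}$ and of the Galois action on it.
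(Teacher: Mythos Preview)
Your proof is correct and follows essentially the same approach as the paper: pick $\beta$ in $\mathcal{R}_{\mathfrak{p}}$ avoiding the residues of the $\alpha_i$ (using that the residue field is infinite), apply a M\"obius transformation defined over $\mathcal{K}_{\mathfrak{p}}$ that moves $\infty$ to a finite point, and read off (i) from the explicit formula and (ii) from Galois-equivariance and the induced homeomorphism over $\cc$. The only cosmetic difference is the particular M\"obius map chosen --- the paper uses $x \mapsto x\beta/(\beta - x)$ (sending $\infty$ to $-\beta$) where you use $x \mapsto (x-\beta)^{-1}$ (sending $\infty$ to $0$) --- but either works equally well.
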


\begin{proof}

Choose $\beta \in \mathcal{R}_{\mathfrak{p}}^{\times}$ satisfying $\beta \not\equiv \alpha_{j}$ (mod $\pi$) for $1 \leq j \leq d$ (this is always possible because the residue field $\mathcal{R}_{\mathfrak{p}} / (\pi)$ is infinite).  Let $\alpha_{j}' = \alpha _{j}\beta / (\beta - \alpha_{j}) \in \mathcal{R}_{\mathfrak{p}}$ for $1 \leq j \leq d$ and $\alpha_{d + 1}' = \beta \in \mathcal{R}_{\mathfrak{p}}$, and let $\psi$ be the $\mathcal{K}_{\mathfrak{p}}$-morphism given by $x \mapsto x\beta / (\beta - x)$.  Then property (i) follows from straightforward computations.  Since $\psi$ is defined over $\mathcal{K}_{\mathfrak{p}}$, the isomorphism $\psi_{*}$ is equivariant with respect to the action of $G_{K, \mathfrak{p}} = \Gal(\bar{\mathcal{K}}_{\mathfrak{p}} / \mathcal{K}_{\mathfrak{p}})$.  Moreover, after base change to $\cc$, $\psi$ is a homeomorphism of punctured Riemann spheres, and the property given in (ii) follows.

\end{proof}

We are finally ready to state and prove the main result of this section, which is essentially a more concrete version of a particular case of Grothendieck's criterion for semistable reduction (\cite[Proposition 3.5(iv)]{grothendieck1972modeles}).  For the statement of the below proposition, we fix a symplectic basis $\{a_{1}, ... , a_{g}, b_{1}, ... , b_{g}\}$ of $H_{1}(C(\cc), \zz)$ (see Figure \ref{homology1} for an example); the image $\{a_{1}, ... , a_{g}, b_{1}, ... , b_{g}\} \subset T_{\ell}(J)$ forms a symplectic basis of $T_{\ell}(J)$ with respect to the Weil pairing.

\begin{prop} \label{prop Galois action local}

Let $C$ be a hyperelliptic curve of genus $g$ over $\mathcal{K}_{\mathfrak{p}}$ given by an equation of the form $y^{2} = h(x)$ for some squarefree polynomial $h$ of degree $d = 2g + 1$ or $d = 2g + 2$ with distinct roots $\alpha_{1}, ... , \alpha_{d} \in \mathcal{R}_{\mathfrak{p}}$, and let $J$ be its Jacobian.  Suppose that exactly $2$ of the roots, $\alpha_{i}$ and $\alpha_{j}$, are equivalent modulo $\pi$, and let $m \geq 1$ be the maximal integer such that $\pi^{m} \mid (\alpha_{i} - \alpha_{j})$.  Then for any prime $\ell \neq p$, the image of the natural action of $G_{K, \mathfrak{p}}$ on $T_{\ell}(J)$ is topologically generated by the element $t_{c}^{2m} \in \Sp(T_{\ell}(J))$ for some $c \in T_{\ell}(J)$ determined by a loop on $\proj_{\cc}^{1}$ whose image separates $\{\alpha_{i}, \alpha_{j}\}$ from the rest of the roots.

As a particular case, if $i \leq 2g$ and $j = 2g + 1$, then this $\ell$-adic Galois image is topologically generated by $t_{c_{i}}^{2m}$ for some $c_{i} \in H_{1}(C(\cc), \zz)$ equivalent modulo $2$ to $a_{(i + 1)/2} + ... + a_{g} + b_{(i + 1)/2}$ (resp. $a_{i/2 + 1} + ... + a_{g} + b_{i/2}$) if $i$ is odd (resp. even).

\end{prop}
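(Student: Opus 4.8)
\emph{Proof plan.}
The plan is to read the monodromy off Theorem~\ref{thm comparison punctured projective line}, push it through the identifications furnished there and through the maps in~(\ref{eq maps of fundamental groups}) to obtain an action on the homology of the hyperelliptic curve, and then recognize the result as a power of a transvection by means of Lemma~\ref{lemma lifts to Dehn twists}. First I would reduce to the case $d = 2g + 2$: if $d = 2g + 1$, the branch locus of $C$ is $\{\alpha_{1}, \dots, \alpha_{d}, \infty\}$, and Lemma~\ref{lemma branch point infty} replaces it by a $(d + 1)$-element subset of $\mathcal{R}_{\mathfrak{p}}$ having the same pairwise valuations (so exactly two of its elements, still called $\alpha_{i}, \alpha_{j}$, are congruent modulo $\pi$, with $\pi^{m}$ the largest power of $\pi$ dividing $\alpha_{i} - \alpha_{j}$) and intertwines $\rho_{\alg}$, compatibly with loops around individual points, with the analogous representation for the resulting degree-$(2g + 2)$ model; by the discussion before Definition~\ref{dfn symplectic basis}, this carries the natural action on $T_{\ell}(J)$ to the natural action on the Tate module of the new Jacobian. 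So I may assume the branch locus is $\{\alpha_{1}, \dots, \alpha_{2g + 2}\} \subset \mathcal{R}_{\mathfrak{p}}$ with only $\alpha_{i}$ and $\alpha_{j}$ congruent modulo $\pi$; relabeling the roots (which affects neither $J$ nor its $\ell$-adic representation) I may take $\{i, j\}$ to be an interval, and for the final assertion $j = 2g + 1$ and $i \leq 2g$.

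Next I would form the family $\mathcal{F} \to B_{\varepsilon}^{*}$ and the loops $\gamma_{I, n}$ of Theorem~\ref{thm comparison punctured projective line}(a) and identify the set $\mathcal{I}$. Since the $\tilde{\alpha}_{k}$ satisfy $x^{n} \mid \tilde{\alpha}_{k} - \tilde{\alpha}_{l}$ in $\cc[x]$ exactly when $\pi^{n} \mid \alpha_{k} - \alpha_{l}$ in $\mathcal{R}_{\mathfrak{p}}$ --- which holds for $1 \leq n \leq m$ when $\{k, l\} = \{i, j\}$ and for no $n \geq 1$ otherwise --- the only elements $(I, n) \in \mathcal{I}$ with $|I| \geq 2$ are $(\{i, j\}, 1), \dots, (\{i, j\}, m)$, and all others have $|I| = 1$. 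A Dehn twist about a loop bounding a once-punctured disc is trivial in the mapping class group, so the $|I| = 1$ terms disappear from the product $\prod_{(I, n) \in \mathcal{I}} D_{[\gamma_{I, n}]}$; and the $m$ remaining loops $\gamma_{\{i, j\}, n}$ are pairwise nonintersecting, each separating $\{\tilde{\alpha}_{i}(z_{0}), \tilde{\alpha}_{j}(z_{0})\}$ from its complement in $\{\tilde{\alpha}_{k}(z_{0})\}_{k = 1}^{2g + 2} \cup \{\infty\}$, hence are pairwise homologous by Theorem~\ref{thm comparison punctured projective line}(a) and define a single Dehn twist $D$. Therefore $\delta$ acts on $\widehat{\pi}_{1}(\mathcal{F}_{z_{0}}, \infty)^{(p')}$ as $D^{m}$.

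By Theorem~\ref{thm comparison punctured projective line}(b),(c), $\rho_{\alg}^{(p')}$ factors through the topologically cyclic group $G_{K, \mathfrak{p}}^{\tame}$, and under the isomorphisms there --- which carry loops around each $\tilde{\alpha}_{k}(z_{0})$ to loops around $\alpha_{k}$ --- a topological generator of $G_{K, \mathfrak{p}}^{\tame}$ acts on $\widehat{\pi}_{1}(\proj_{\cc}^{1} \smallsetminus \{\alpha_{1}, \dots, \alpha_{2g + 2}\}, \infty)^{(p')}$ as $D^{m}$, where now $D = D_{[\gamma]}$ for a loop $\gamma$ whose image separates $\{\alpha_{i}, \alpha_{j}\}$ from the remaining $2g$ roots. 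The inclusion in~(\ref{eq maps of fundamental groups}) being of a characteristic subgroup, $D$ induces an automorphism of $\widehat{\pi}_{1}(C(\cc) \smallsetminus \mathfrak{B}, Q)^{(p')}$, where $\mathfrak{B} \subset C(\cc)$ is the branch-point set of~(\ref{eq maps of fundamental groups}), hence of $H_{1}(C(\cc), \zz) \otimes \zz_{\ell} = T_{\ell}(J)$; and since $\gamma$ separates $\{\alpha_{i}, \alpha_{j}\}$ (two points) from the remaining $2g$ roots, both sets have even cardinality, so Lemma~\ref{lemma lifts to Dehn twists}(b) gives that $D$ induces $T_{c}^{2}$ on $H_{1}(C(\cc), \zz)$, with $\pm c$ the homology classes of the two lifts of $\gamma$, whence $D^{m}$ induces $T_{c}^{2m}$. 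Because the $G_{K, \mathfrak{p}}$-action induced by $\rho_{\alg}^{(p')}$ on $T_{\ell}(J)$ is the natural one, $T_{c} \in \Sp(T_{\ell}(J))$, and $G_{K, \mathfrak{p}}^{\tame}$ is topologically cyclic, the image of the natural action on $T_{\ell}(J)$ is topologically generated by $T_{c}^{2m}$ for each $\ell \neq 2, p$. Finally, taking $j = 2g + 1$, $i \leq 2g$ and applying Lemma~\ref{lemma lifts to Dehn twists}(a) with $z_{k} = \alpha_{k}$ (whose formula~(\ref{eq c_i}) is insensitive to whether the last branch point is finite or equals $\infty$), while identifying the fixed symplectic basis $\{a_{1}, \dots, b_{g}\}$ of $H_{1}(C(\cc), \zz)$ with the one used there, shows that $c$ may be taken equal to the element $c_{i}'$ of~(\ref{eq c_i}); the sign ambiguity is immaterial since $T_{-c} = T_{c}$.

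The step demanding the most care is pinning down $\mathcal{I}$ and arguing that only the $m$ Dehn twists about the $\gamma_{\{i, j\}, n}$ survive in the product while all inducing the same automorphism of $H_{1}(C(\cc), \zz)$; once that is in place, the remainder is a routine unwinding of the comparison isomorphisms of Theorem~\ref{thm comparison punctured projective line}, of Lemma~\ref{lemma branch point infty}, and of Lemma~\ref{lemma lifts to Dehn twists}, through the maps~(\ref{eq maps of fundamental groups}).
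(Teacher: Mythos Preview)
Your plan is the right one and tracks the paper's: identify $\mathcal{I}$, read off from Theorem~\ref{thm comparison punctured projective line}(a) that a generator acts as the $m$th power of a single Dehn twist, transfer to the algebraic side via parts (b) and (c), lift to the hyperelliptic double cover through~(\ref{eq maps of fundamental groups}) and Lemma~\ref{lemma lifts to Dehn twists}, and handle $d=2g+1$ with Lemma~\ref{lemma branch point infty}. The substantive gap is in how you pass through the comparison isomorphism $\phi$.

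You assert that under $\phi$ a generator of $G_{K,\mathfrak{p}}^{\tame}$ acts on $\widehat{\pi}_{1}(\proj_{\cc}^{1}\smallsetminus\{\alpha_{1},\dots,\alpha_{2g+2}\},\infty)^{(p')}$ \emph{as} $D_{[\gamma]}^{m}$ for a loop $\gamma$ on that punctured sphere separating $\{\alpha_{i},\alpha_{j}\}$ from the rest, and you then invoke Lemma~\ref{lemma lifts to Dehn twists}(b) directly on $C$. But Theorem~\ref{thm comparison punctured projective line}(c) only says $\phi$ carries loops around single punctures to loops around single punctures; it does not say $\phi$ comes from a homeomorphism of punctured spheres, so there is no reason the conjugate $\phi\circ D\circ\phi^{-1}$ should itself be a Dehn twist on the target surface. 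Without that, Lemma~\ref{lemma lifts to Dehn twists}(b) is not applicable on the $C$ side.

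The paper sidesteps this by reversing the order of operations. It applies Lemma~\ref{lemma lifts to Dehn twists}(b) to the auxiliary curve $\mathfrak{C}$ lying over $\mathcal{F}_{z_{0}}$---where the Dehn twist is genuinely geometric---obtaining $T_{c'}^{2m}$ on $H_{1}(\mathfrak{C},\zz)$, and only then pushes forward along the isomorphism $H_{1}(\mathfrak{C})\otimes\widehat{\zz}^{(p')}\stackrel{\sim}{\to}H_{1}(C)\otimes\widehat{\zz}^{(p')}$ induced by $\phi$. The resulting automorphism is $v\mapsto v+2m\langle v,c\rangle_{\phi}\,c$ with $c=\phi(c')$, where $\langle\cdot,\cdot\rangle_{\phi}$ is the \emph{transported} pairing, a priori different from the intersection pairing on $H_{1}(C)$ (equivalently from $e_{\ell}$). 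The paper then uses that the image of $\rho_{\ell}$ lies in $\Sp(T_{\ell}(J))$---because $\mathcal{K}_{\mathfrak{p}}$ contains all $\ell$-power roots of unity---to force $\langle v,c\rangle_{\phi}=\pm e_{\ell}(v,c)$, which recovers $T_{c}^{2m}$ with respect to $e_{\ell}$. For the mod-$2$ identification of $c$ in the particular case $j=2g+1$, the single-puncture property of $\phi$ \emph{is} enough: it pins down the map on $H_{1}(\cdot,\zz/2\zz)$ via the embedding into $H_{1}(\proj^{1}\smallsetminus\{\alpha_{k}\},\zz/2\zz)$ from the proof of Lemma~\ref{lemma lifts to Dehn twists}, giving $\phi(a_{k}')\equiv a_{k}$ and $\phi(b_{k}')\equiv b_{k}$ modulo $2$. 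Your handling of $\mathcal{I}$ is fine; the paper simply records $\mathcal{I}=\{(\{i,j\},n):1\le n\le m\}$ without singleton sets, so your discussion of trivial Dehn twists is unnecessary but harmless.
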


\begin{proof}

We first assume that $d = 2g + 2$.  Let $\tilde{\alpha}_{1}, ... , \tilde{\alpha}_{2g + 2} \in \cc[x]$ be the elements constructed from the $\alpha_{i}$'s as above, and define the family $\mathcal{F} \to B_{\varepsilon}^{*}$ as above.  It is clear from the hypothesis on the roots $\alpha_{j}$ that the set $\mathcal{I}$ in the statement of Theorem \ref{thm comparison punctured projective line} consists of only the elements $(\{i, 2g + 1\}, n)$ for $1 \leq n \leq m$.  Theorem \ref{thm comparison punctured projective line}(a) then implies that a topological generator of $\widehat{\pi}_{1}(B_{\varepsilon}^{*}, z_{0})$ acts on $\widehat{\pi}_{1}(\mathcal{F}_{z_{0}}, \infty)$ via the monodromy action $\rho_{\tp}$ as $D_{[\gamma_{i}]}^{m}$, where $\gamma_{i}$ is a loop on $\mathcal{F}_{z_{0}} \smallsetminus \{\infty\}$ whose image in $\proj_{\cc}^{1}$ separates the subset $\{\tilde{\alpha}_{i}(z_{0}), \tilde{\alpha}_{2g + 1}(z_{0})\}$ from its complement in $\{\tilde{\alpha}_{j}(z_{0})\}_{j = 1}^{2g + 2}$.  Let $\mathfrak{C}$ be the complex hyperelliptic curve of degree $d = 2g + 2$ ramified over $\proj_{\cc}^{1}$ at the points $\tilde{\alpha}_{1}(z_{0}), ... , \tilde{\alpha}_{d}(z_{0}) \in \cc$, which has a symplectic basis $\{a_{1}', ... , a_{g}', b_{1}', ... , b_{g}'\}$ of $H_{1}(\mathfrak{C}(\cc), \zz)$.  By Lemma \ref{lemma lifts to Dehn twists}, the automorphism of $\widehat{\pi}_{1}(\mathcal{F}_{z_{0}}, \infty)$ determined by $D_{[\gamma_{i}]}^{m}$ induces the automorphism of $H_{1}(\mathfrak{C}(\cc), \zz) \otimes \widehat{\zz}$ given by $t_{c_{i}}^{2m} : v \mapsto v + 2m \langle v, c_{i}' \rangle c_{i}'$, where $c_{i}' \in H_{1}(\mathfrak{C}(\cc), \zz)$ is equivalent modulo $2$ to the formula in (\ref{eq c_i}).  Now parts (b) and (c) of Theorem \ref{thm comparison punctured projective line} say that there is an isomorphism $\phi : \widehat{\pi}_{1}(\mathcal{F}_{z_{0}}, \infty)^{(p')} \stackrel{\sim}{\to} \widehat{\pi}_{1}(\proj_{\cc}^{1} \smallsetminus \{\alpha_{1}, ... , \alpha_{d}\}, \infty)^{(p')}$ making the action $\rho_{\tp}$ isomorphic to the Galois action $\rho_{\alg}$.  Since $p \neq 2$ and $\mathfrak{C}(\cc) \smallsetminus \mathfrak{B}$ and $C(\cc) \smallsetminus \{(\alpha_{i}, 0)\}_{i = 1}^{2g + 2}$ are the only degree-$2$ covers of $\mathcal{F}_{z_{0}}$ and $\proj_{\cc}^{1} \smallsetminus \{\alpha_{1}, ... , \alpha_{2g + 2}\}$ respectively, we see that the isomorphism $\phi$ induces an isomorphism $H_{1}(\mathfrak{C}(\cc), \zz) \otimes \widehat{\zz}^{(p')} \stackrel{\sim}{\to} H_{1}(C(\cc), \zz) \otimes \widehat{\zz}^{(p')}$ which we also denote by $\phi$.  It is clear from the property of $\phi$ given in Theorem \ref{thm comparison punctured projective line}(c) and from our characterization of homology groups with coefficients in $\zz / 2\zz$ in the proof of Lemma \ref{lemma lifts to Dehn twists} that $\phi(a_{j}') \equiv a_{j}$ and $\phi(b_{j}') \equiv b_{j}$ (mod $2$) for $1 \leq j \leq g$.

In the case that $d = 2g + 1$, we get the same results by applying Lemma \ref{lemma branch point infty}, which allows us to replace $\alpha_{1}, ... , \alpha_{2g + 1}, \alpha_{2g + 2} := \infty$ with elements $\alpha_{1}', ... , \alpha_{2g + 2}' \in \mathcal{R}_{\mathfrak{p}}$ whose differences have the same valuations with respect to $\pi$.

Putting this all together, we see that the tame Galois action on $H_{1}(C(\cc), \zz) \otimes \widehat{\zz}^{(p')}$ induced by $\rho_{\alg}$ sends a generator of $G_{K, \mathfrak{p}}^{\tame}$ to the automorphism of $H_{1}(C(\cc), \zz) \otimes \widehat{\zz}^{(p')}$ given by $v \mapsto v + 2m \langle v, c_{i} \rangle_{\phi} c_{i}$ for some $c_{i} \in H_{1}(C(\cc), \infty) \otimes \widehat{\zz}^{(p')}$ which is equivalent modulo $2$ to the formula given in the statement.  Here $\langle \cdot, \cdot \rangle_{\phi}$ is the skew-symmetric pairing on $H_{1}(C(\zz), \zz) \otimes \widehat{\zz}^{(p')}$ induced by the interection pairing on $H_{1}(\mathfrak{C}, \zz)$ via $\phi$; note that $\langle \cdot, \cdot \rangle_{\phi}$ is normalized so that $\langle H_{1}(C(\cc), \zz), H_{1}(C(\cc), \zz) \rangle_{\phi} = \zz$.  As above, we identify the maximal pro-$\ell$ quotient of $H_{1}(C(\cc), \zz) \otimes \widehat{\zz}^{(p')}$ with $T_{\ell}(J)$ and see that the natural $\ell$-adic Galois action $\rho_{\ell}$ factors through $G_{K, \mathfrak{p}}^{\tame}$ and takes a generator to the automorphism of $T_{\ell}(J)$ given by $v \mapsto v + 2m\langle v, c_{i} \rangle_{\phi} c_{i}$.  But this automorphism must lie in $\Sp(T_{\ell}(J))$ by the Galois equivarience of the Weil pairing $e_{\ell}$ and the fact that $\mathcal{K}_{\mathfrak{p}}$ contains all $\ell$-power roots of unity.  It is now an easy exercise to verify that this implies that $\langle v, c_{i} \rangle_{\phi} = \pm e_{\ell}(v, c_{i})$ for all $v \in T_{\ell}(J)$, and so the image of $\rho_{\ell}$ is generated by $t_{c_{i}}^{2m} : v \mapsto v + 2m e_{\ell}(v, c_{i}) c_{i}$, as desired.

\end{proof}

In order to prove Theorems \ref{thm main1} and \ref{thm main2}, we will put some local data together to show using Proposition \ref{prop Galois action local} that the $\ell$-adic Galois image contains $\{t_{c_{1}}^{2m}, ... , t_{c_{2g}}^{2m}\}$ for homology classes $c_{i}$ as above and a certain integer $m$.  Therefore, it is of interest to investigate the subgroup of $\Sp(T_{\ell}(J))$ that this set generates.  We will see that this subgroup is always open, but unfortunately, since the $c_{i}$'s are only known modulo $2$, not much more can be deduced except if $\ell = 2$.  In this case, the subgroup of $\Sp(T_{2}(J))$ generated by the above set can be determined precisely, which is the goal of the next section.

\section{Subgroups generated by powers of transvections} \label{S3}

For this section, we let $M$ be a free $\zz$-module of rank $2g$, equipped with a nondegenerate skew-symmetric $\zz$-bilinear pairing $\langle \cdot, \cdot \rangle : M \times M \to \zz$.  For any ring $A$, this pairing induces in an obvious way a nondegenerate skew-symmetric $A$-bilinear pairing on the free $A$-module $M \otimes A$ which we also denote by $\langle \cdot, \cdot \rangle$.  We write $\Sp(M \otimes A)$ for the symplectic group of $A$-automorphisms of $M \otimes A$ which respect this pairing; for any integer $N \geq 1$, we write $\Gamma(N)$ for the level-$N$ congruence subgroup of $\Sp(M \otimes A)$ as defined in \S\ref{S1}.  For any element $c \in M \otimes A$, we write $t_{c} \in \Sp(M \otimes A)$ for the transvection with respect to $c$ and the pairing $\langle \cdot, \cdot \rangle$, as in Definition \ref{dfn symplectic basis}(a).  We fix a basis $\{a_{1}, ... , a_{g}, b_{1}, ... , b_{g}\}$ of $M$ and assume that the pairing is determined by $\langle a_{i}, b_{i} \rangle = -1$ for $1 \leq i \leq g$ and $\langle a_{i}, a_{j} \rangle = \langle b_{i}, b_{j} \rangle = \langle a_{i}, b_{j} \rangle = 0$ for $1 \leq i < j \leq g$.  We also write $a_{i}, b_{i} \in M \otimes A$ for the elements $a_{i} \otimes 1$ and $b_{i} \otimes 1$ respectively for $1 \leq i \leq g$.  When $A$ is a field, we write $\mathfrak{sp}(M \otimes A)$ for the Lie algebra of endomorphisms of $M \otimes A$ corresponding to the Lie group $\Sp(M \otimes A)$ which is equipped with the Lie bracket $[\cdot, \cdot]$; it is well known that $\mathfrak{sp}(M \otimes A)$ is a $(2g^2 + g)$-dimensional vector space over $A$.  Our main goal is to prove the following purely algebraic result.

\begin{prop} \label{prop open subgroup}

Let $c_{1}, ... , c_{2g} \in M$ be elements such that $c_{i}$ is equivalent modulo $2$ to 
\begin{equation} \begin{cases} a_{(i + 1)/2} + ... + a_{g} + b_{(i + 1)/2} & i \ \mathrm{odd}\\ a_{i/2 + 1} + ... + a_{g} + b_{i/2} & i \ \mathrm{even} \end{cases} \end{equation}
 for $1 \leq i \leq 2g$, and let $\ell$ be any prime.  Let $G'_{\ell} \subseteq \Sp(M \otimes \zz_{\ell})$ be the subgroup topologically generated by the elements $t_{c_{1}}^{m_{1}}, ... , t_{c_{2g}}^{m_{2g}}$ for some integers $m_{i} \in \zz \smallsetminus \{0\}$.  Then $G'_{\ell}$ is open as a Lie subgroup of $\Sp(M \otimes \zz_{\ell})$.

In the case that $\ell = 2$, assume moreover that for some integers $n, n' \geq 1$, we have $v_{2}(m_{i}) \leq n$ for $1 \leq i \leq 2g - 1$ and $v_{2}(m_{2g}) = n'$.  Then $G_{2}'$ contains $\Gamma(2^{2n})$ (resp. $\Gamma(2^{n + n'})$) if $n' \leq n$ (resp. if $n' > n$).

\end{prop}

Note that for any prime $\ell$, the image of any transvection $t \in \Sp(M \otimes \zz_{\ell})$ under the logarithm map is $t - 1 \in \mathfrak{sp}(M \otimes \qq_{\ell})$ and that the Lie algebra of $G'_{\ell}$ is generated as a Lie algebra by the logarithms of the transvections $t_{c_{i}}$, which are given by $T_{i} := t_{c_{i}} - 1$.  Therefore, in order to prove the first statement of this proposition, it suffices to show that $\{T_{i}\}_{1 \leq i \leq 2g}$ generates the full $\ell$-adic symplectic Lie algebra $\mathfrak{sp}(M \otimes \qq_{\ell})$.  However, in order to get both statements, we will prove something slightly stronger which is implied by the following lemma.

\begin{lemma} \label{lemma transvection commutator basis}

Let $\bar{T}_{i} \in \mathfrak{sp}(M \otimes \ff_{2})$ denote the image modulo $2$ of $t_{c_{i}} - 1 \in \mathfrak{sp}(M \otimes \zz_{2})$.  Then an $\ff_{2}$-basis for $\mathfrak{sp}(M \otimes \ff_{2})$ is given by the set $\{\bar{T}_{i}\}_{1 \leq i \leq 2g} \cup \{[\bar{T}_{i}, \bar{T}_{j}]\}_{1 \leq i < j \leq 2g}$.

\end{lemma}

\begin{proof}

Since the vector space $\mathfrak{sp}(M \otimes \ff_2)$ has dimension $2g^2 + g$ over $\ff_2$ and there are $2g^{2} + g$ elements listed in the set of $\bar{T}_{i}$'s and their commutators, it suffices to prove that this set is linearly independent over $\ff_{2}$.

Assume that for some scalars $\gamma_{i, j} = \gamma_{j, i} \in \ff_{2}$ for $1 \leq i \leq j \leq 2g$, we have 
\begin{equation} \sum_{i = 1}^{2g} \gamma_{i, i} \bar{T}_{i} + \sum_{1 \leq i < j \leq 2g} \gamma_{i, j} [\bar{T}_{i}, \bar{T}_{j}] = 0. \end{equation}
Let $\bar{c}_i$ denote the reduction modulo $2$ of $c_i \in M$ for $1 \leq i \leq 2g$; from the characterization of the $c_i$'s modulo $2$ we see that the set of elements $\bar{c}_i$ is linearly independent over $M \otimes \ff_2$.  Note that each commutator $[\bar{T}_{i}, \bar{T}_{j}]$ is the map given by $v \mapsto \langle v, \bar{c}_{j} \rangle \bar{c}_{i} + \langle v, \bar{c}_{i} \rangle \bar{c}_{j}$.  Thus, for all $v \in M \otimes \ff_{2}$, we have 
$$0 = \sum_{i = 1}^{2g} \gamma_{i, i} \langle v, \bar{c}_{i} \rangle \bar{c}_{i} + \sum_{1 \leq i < j \leq 2g} \gamma_{i, j} (\langle v, \bar{c}_{i} \rangle \bar{c}_{j} + \langle v, \bar{c}_{j} \rangle \bar{c}_{i}) = \sum_{i = 1}^{2g} \left(\sum_{j = 1}^{2g} \gamma_{i, j} \langle v, \bar{c}_{j} \rangle \bar{c}_{i}\right) = \sum_{i = 1}^{2g} \langle v, \textstyle\sum_{j = 1}^{2g} \gamma_{i, j} \bar{c}_{j} \rangle \bar{c}_{i}.$$
Since the $\bar{c}_{i}$'s are linearly independent, it follows that $\langle v, \sum_{j = 1}^{2g} \gamma_{i, j} \bar{c}_{j} \rangle \equiv 0$ for each $i$.  Since our symplectic pairing is nondegenerate, we have $\sum_{j = 1}^{2g} \gamma_{i, j} \bar{c}_{j} = 0$ for each $i$, and therefore all of the $\gamma_{i, j}$'s are equal to $0$.  The claim follows.

\end{proof}

\begin{proof}[Proof (of Proposition \ref{prop open subgroup})]

Consider the set of $T_{i}$'s and their commutators viewed as endomorphisms of the free $\zz$-module $M$.  Lemma \ref{lemma transvection commutator basis} says that their images modulo $2$ are linearly independent over $\ff_{2}$.  Applying Nakayama's Lemma, we get that the set of $T_{i}$'s and their commutators is $\zz$-linearly independent, and therefore, for each prime $\ell$, the set $\{T_{i}\}_{1 \leq i \leq 2g} \cup \{[T_{i}, T_{j}]\}_{1 \leq i < j \leq 2g} \subset \mathfrak{sp}(M \otimes \qq_{\ell})$ is $\qq_{\ell}$-linearly independent.  Since this set consists of $2g^{2} + g$ elements and the symplectic Lie algebra $\mathfrak{sp}(M \otimes \qq_{\ell})$ is $(2g^{2} + g)$-dimensional, the set generates all of $\mathfrak{sp}(M \otimes \qq_{\ell})$.  Therefore, the Lie algebra of $G'_{\ell}$, which clearly contains the $T_{i}$'s and their commutators, coincides with $\mathfrak{sp}(M \otimes \qq_{\ell})$.  The first statement of the proposition immediately follows.

Now we assume that $\ell = 2$ and prove the second statement.  Note that it is meaningful to raise transvections to the powers of elements in $\zz_{2}$; therefore, since $2^{n'} / m_{2g}, 2^{n} / m_{i} \in \zz_{2}$ for $1 \leq i \leq 2g - 1$, it suffices to prove the statement under the assumption that $m_{1} = ... = m_{2g - 1} = 2^{n}$ and $m_{2g} = 2^{n'}$.

For ease of notation, we write $n'' = \max \{n, n'\}$ and $N = n + n''$, so that the desired statement is that $G'_{2}$ contains the congruence subgroup $\Gamma(2^{N})$.  Since $G'_{2}$ is closed, it suffices to show that the image of $G$ modulo $2^{N + m}$ contains $\Gamma(2^{N}) / \Gamma(2^{N + m})$ for each integer $m \geq 1$.  We claim that in fact we only need to show this for $m = 1$.  Indeed, for any $m \geq 1$, the restriction of the logarithm map to $\Gamma(2)$ sends each element $t \in \Gamma(2^{N + m - 1})$ to an element of $\mathfrak{sp}(M \otimes \qq_{2})$ which is equivalent to $t - 1$ modulo $2^{N + m}$ since $(t - 1)^{2} \equiv 0$ modulo $2^{N + m}$.  In this way, one verifies that there is an isomorphism from the additive group $\mathfrak{sp}(M \otimes \ff_{2})$ to $\Gamma(2^{N + m - 1}) / \Gamma(2^{N + m})$, given by sending an element $T \in \mathfrak{sp}(M \otimes \ff_{2})$ to $1 + 2^{N + m - 1}\tilde{T} \in \Sp(M \otimes \zz_{2})$, where $\tilde{T}$ is any operator in $\End(M \otimes \zz / 2^{N + m}\zz)$ whose image modulo $2$ is $T$.  In particular, each $\Gamma(2^{N + m - 1}) / \Gamma(2^{N + m})$ is an elementary abelian group of exponent $2$ and rank $2g^{2} + g$ (this is also known from the proof of \cite[Corollary 2.2]{sato2010abelianization}).  Moreover, it is easy to check that for each $m \geq 1$, the map sending each matrix in $\Gamma(2^{N})$ to its $2^{m - 1}$th power induces a group isomorphism $\Gamma(2^{N}) / \Gamma(2^{N + 1}) \stackrel{\sim}{\to} \Gamma(2^{N + m - 1}) / \Gamma(2^{N + m})$.  It follows that if the image of $G'_{2}$ modulo $2^{N + 1}$ contains $\Gamma(2^{N}) / \Gamma(2^{N + 1})$, then the image of $G'_{2}$ modulo $2^{N + m}$ contains $\Gamma(2^{N}) / \Gamma(2^{N + m})$ for all $m \geq 1$.  Thus, in order to prove the proposition, it suffices to show that the image of $G'_{2}$ modulo $2^{N + 1}$ contains $\Gamma(2^{N}) / \Gamma(2^{N + 1})$.

As above, let $T_{i}$ denote $t_{c_{i}} - 1$ for $1 \leq i \leq 2g$.  Since Lemma \ref{lemma transvection commutator basis} says that the image modulo $2$ of $\{T_{i}\}_{1 \leq i \leq 2g} \cup \{[T_{i}, T_{j}]\}_{1 \leq i < j \leq 2g}$ is a basis of $\mathfrak{sp}(M \otimes \ff_{2})$, it suffices to show that the image of $G'_{2}$ modulo $2^{N + 1}$ contains the images of the elements in $\{1 + 2^{N}T_{i}\}_{1 \leq i \leq 2g} \cup \{1 + 2^{N}[T_{i}, T_{j}]\}_{1 \leq i < j \leq 2g} \subset \Gamma(2^{2n})$.  Clearly $G'_{2}$ is generated by $\{1 + 2^{n}T_{i}\}_{1 \leq i \leq 2g - 1} \cup \{1 + 2^{n'}T_{2g}\}$.  We verify using the property $T_{i}^{2} = 0$ that $(1 + 2^{n}T_{i})^{2^{n''}} \equiv 1 + 2^{N}T_{i}$ (mod $2^{N + 1}$) for $1 \leq i \leq 2g - 1$, that $(1 + 2^{n'}T_{2g})^{2^{n + n'' - n'}} \equiv 1 + 2^{N}T_{2g}$ (mod $2^{N + 1}$), and that 
$$(1 + 2^{n}T_{i})(1 + 2^{n''}T_{j})(1 + 2^{n}T_{i})^{-1}(1 + 2^{n''}T_{j})^{-1} \equiv 1 + 2^{N}[T_{i}, T_{j}] \ (\mathrm{mod} \ 2^{N + 1})$$
 for $1 \leq i < j \leq 2g$.  Thus, $G'_{2} \supset \Gamma(2^{N})$, as desired.

\end{proof}

We now state and prove another proposition which will be needed only for the last statements of Theorems \ref{thm main1} and \ref{thm main2}, which pertain to the case that the hyperelliptic curve has degree $4$.

\begin{prop} \label{prop open subgroup degree 4}

Assume the notation and assumptions of Proposition \ref{prop open subgroup} and that $g = 1$ and $n' = n$, and let $c_{3} \in M$ be an element which is equivalent modulo $2$ to $a_{1}$.  Then the subgroup of $\Gamma(2^{n})$ generated by $G_{2}$ and the element $t_{c_{3}}^{2^{n}}$ coincides with $\Gamma(2^{n})$.

\end{prop}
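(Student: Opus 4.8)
The plan is to run the same reduction as in the proof of Proposition \ref{prop open subgroup}, but now at level $N = n$ rather than $N = 2n$. Let $H$ denote the subgroup of $\Gamma(2^{n})$ generated by $G$ and $T_{c_{3}}^{2^{n}}$; since $g = 1$ and $n' = n$, the group $G$ is generated by $T_{c_{1}}^{2^{n}}$ and $T_{c_{2}}^{2^{n}}$, so $H = \langle T_{c_{1}}^{2^{n}}, T_{c_{2}}^{2^{n}}, T_{c_{3}}^{2^{n}} \rangle$. Put $t_{i} = T_{c_{i}} - 1 \in \mathfrak{sp}(M \otimes \zz_{2})$ for $i = 1, 2, 3$; using $t_{i}^{2} = 0$ one has $T_{c_{i}}^{2^{n}} = 1 + 2^{n} t_{i}$ exactly, so all three generators lie in $\Gamma(2^{n})$, and $H$ is closed because it contains the open normal subgroup $\Gamma(2^{2n}) \subseteq G$ provided by Proposition \ref{prop open subgroup}. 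The power-map and logarithm bookkeeping in the proof of Proposition \ref{prop open subgroup} uses only that the level is at least $1$, so it applies verbatim here; it therefore suffices to show that the image of $H$ in $\Gamma(2^{n}) / \Gamma(2^{n + 1})$ is all of $\Gamma(2^{n}) / \Gamma(2^{n + 1})$. Recall that this quotient is identified, via $1 + 2^{n} X \mapsto X \bmod 2$, with the $\ff_{2}$-vector space $\mathfrak{sp}(M \otimes \ff_{2}) = \mathfrak{sp}_{2}(\ff_{2})$, which is $3$-dimensional, and under this identification the image of $T_{c_{i}}^{2^{n}}$ is $\bar{t}_{i} := t_{i} \bmod 2$.

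Because $\Gamma(2^{n}) / \Gamma(2^{n + 1})$ is abelian of exponent $2$, the image of $H$ in it is exactly the $\ff_{2}$-span of $\bar{t}_{1}, \bar{t}_{2}, \bar{t}_{3}$ (the subgroup $\Gamma(2^{2n}) \subseteq \Gamma(2^{n + 1})$ contributes nothing). So I would be reduced to the purely linear-algebraic claim that $\{\bar{t}_{1}, \bar{t}_{2}, \bar{t}_{3}\}$ spans $\mathfrak{sp}_{2}(\ff_{2})$.

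To prove this, I would use the explicit mod-$2$ classes. For $g = 1$ the reductions modulo $2$ of $c_{1}, c_{2}, c_{3}$ are $a_{1} + b_{1}$, $b_{1}$, and $a_{1}$ respectively ($c_{1}$ by the formula in Proposition \ref{prop open subgroup} for the index $i = 1$; $c_{2}$, the index $i = 2 = 2g$, reducing to $b_{1}$; and $c_{3} \equiv a_{1}$ by hypothesis), so that $\bar{c}_{3} = \bar{c}_{1} + \bar{c}_{2}$ and $\langle \bar{c}_{1}, \bar{c}_{2} \rangle = 1$ in $M \otimes \ff_{2}$. Each $\bar{t}_{s}$ is the operator $v \mapsto \langle v, \bar{c}_{s} \rangle \bar{c}_{s}$, and by (\ref{eq commutator}) the commutator $[\bar{t}_{1}, \bar{t}_{2}]$ is $v \mapsto \langle v, \bar{c}_{1} \rangle \bar{c}_{2} + \langle v, \bar{c}_{2} \rangle \bar{c}_{1}$; summing these three operators,
$$(\bar{t}_{1} + \bar{t}_{2} + [\bar{t}_{1}, \bar{t}_{2}])(v) = \langle v, \bar{c}_{1} + \bar{c}_{2} \rangle (\bar{c}_{1} + \bar{c}_{2}) = \langle v, \bar{c}_{3} \rangle \bar{c}_{3} = \bar{t}_{3}(v)$$
for every $v$, i.e. $\bar{t}_{3} = \bar{t}_{1} + \bar{t}_{2} + [\bar{t}_{1}, \bar{t}_{2}]$. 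By Lemma \ref{lemma transvection commutator basis} with $g = 1$, the set $\{\bar{t}_{1}, \bar{t}_{2}, [\bar{t}_{1}, \bar{t}_{2}]\}$ is a basis of $\mathfrak{sp}_{2}(\ff_{2})$; since $[\bar{t}_{1}, \bar{t}_{2}] = \bar{t}_{1} + \bar{t}_{2} + \bar{t}_{3}$, it follows that $\{\bar{t}_{1}, \bar{t}_{2}, \bar{t}_{3}\}$ spans the same $3$-dimensional space, which finishes the argument.

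I do not expect a genuine obstacle. The one substantive point is the identity $\bar{t}_{3} = \bar{t}_{1} + \bar{t}_{2} + [\bar{t}_{1}, \bar{t}_{2}]$, which is immediate from $\bar{c}_{3} = \bar{c}_{1} + \bar{c}_{2}$; everything else is the same formal reduction carried out in the proof of Proposition \ref{prop open subgroup}. The only place warranting a sentence of care is the claim that that reduction is legitimate at level $N = n$ rather than $N = 2n$, but it relies only on the isomorphisms $\Gamma(2^{N}) / \Gamma(2^{N + 1}) \cong \mathfrak{sp}(M \otimes \ff_{2})$ and $\Gamma(2^{N}) / \Gamma(2^{N + 1}) \stackrel{\sim}{\to} \Gamma(2^{N + m - 1}) / \Gamma(2^{N + m})$, valid for all $N \geq 1$, together with the closedness of $H$ noted above.
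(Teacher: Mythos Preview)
Your proposal is correct and follows essentially the same approach as the paper: reduce via the argument of Proposition~\ref{prop open subgroup} to showing that $\{\bar{t}_{1}, \bar{t}_{2}, \bar{t}_{3}\}$ spans $\mathfrak{sp}(M \otimes \ff_{2})$, then use $\bar{c}_{3} = \bar{c}_{1} + \bar{c}_{2}$ to verify this. The paper checks the spanning by simply writing out the three matrices in the basis $\{\bar{c}_{1}, \bar{c}_{2}\}$, whereas you derive the identity $\bar{t}_{3} = \bar{t}_{1} + \bar{t}_{2} + [\bar{t}_{1}, \bar{t}_{2}]$ and invoke Lemma~\ref{lemma transvection commutator basis}; this is a harmless stylistic variation, and your extra care in noting that $H$ is closed (via $\Gamma(2^{2n}) \subseteq H$) is a point the paper leaves implicit.
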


\begin{proof}

By the arguments used at the beginning of the proof of Proposition \ref{prop open subgroup}, we only need to show that the images of $t_{c_{1}}^{2^{n}}, t_{c_{2}}^{2^{n}}, t_{c_{3}}^{2^{n}}$ modulo $2^{n + 1}$ generate $\Gamma(2^{n}) / \Gamma(2^{n + 1})$.  Using the isomorphism from the additive group $\mathfrak{sp}(M \otimes \ff_{2})$ to $\Gamma(2^{n}) / \Gamma(2^{n + 1})$ which was established in that proof, we see that it suffices to show that $\{\bar{T}_{1}, \bar{T}_{2}, \bar{T}_{3}\}$ is linearly independent and thus generates the rank-$3$ elementary abelian group $\mathfrak{sp}(M \otimes \ff_{2})$, where $\bar{T}_{i}$ denotes the image modulo $2$ of $t_{c_{i}} - 1$ for $1 \leq i \leq 3$.  But this is clear from noting that $c_{3} \equiv c_{1} + c_{2}$ (mod $2$) and writing out these linear operators as matrices with respect to an ordered basis of $M \otimes \ff_{2}$ consisting of the images of $c_{1}$ and $c_{2}$.

\end{proof}

\begin{rmk} \label{rmk graph}

It is possible to prove a more general version of Proposition \ref{prop open subgroup} whose statement is as follows.  Let $c_1, ... , c_{2g} \in M$ be elements which form a basis of $M$, and choose a prime $\ell$ and integers $m_1, ... , m_{2g}$ and define $G_{\ell}' \subseteq \Sp(M \otimes \zz_{\ell})$ as in the statement of the proposition.  Consider the simple graph $\Gamma$ whose vertices are given by the basis elements $c_i$ and whose edge set is defined so that two vertices $c_i$ and $c_j$ are connected by an edge if and only if $\langle c_i, c_j \rangle \equiv 1$ (mod $2$).  Then if the graph $\Gamma$ is connected, we have that $G_{\ell}'$ is open as a subgroup of $\Sp(M \otimes \zz_{\ell})$.  If we assume moreover that $\ell = 2$, given an integer $n \geq 1$ such that $v_2(m_i) \leq n$ for $1 \leq i \leq 2g$, we have $G_2' \supsetneq \Gamma(2^{(\delta + 1)n})$, where $\delta$ is the diameter of the graph $\Gamma$.  These claims can be proved using a similar strategy as in the proof of Proposition \ref{prop open subgroup}, which treats the special case where $\Gamma$ is a complete graph and thus is connected with diameter $\delta = 1$.

\end{rmk}

\section{Proof of main theorems and a further result} \label{S4}

The main goal of this section is to prove Theorems \ref{thm main1} and \ref{thm main2}.  Our strategy for this is to put together local results with respect to several primes of $K$ using Proposition \ref{prop Galois action local} to get several elements in $G_{\ell}$ and then to use Proposition \ref{prop open subgroup} to determine that the subgroup of $G_{\ell}$ generated by these elements is open and, when $\ell = 2$, contains a certain congruence subgroup of the symplectic group.  This is realized by the following theorem, which can be used in a far more general situation than is required for Theorems \ref{thm main1} and \ref{thm main2} and is therefore a useful result in its own right.

\begin{thm} \label{thm several primes}

Let $J$ be the Jacobian of the hyperelliptic curve $C$ over $K$ of genus $g$ and degree $d'$ with defining equation $y^{2} = \prod_{i = 1}^{d'} (x - \alpha_{i})$ for some elements $\alpha_{i} \in \mathcal{O}_{K}$ for $1 \leq i \leq d'$, and define the $2$-adic Galois image $G_{2}$ as above.  Suppose that there are distinct primes $\mathfrak{p}_{1}$, ... , $\mathfrak{p}_{d' - 1}$ of $K$ not lying over $(2)$ and such that for $1 \leq i \leq d' - 1$, the only two $\alpha_{j}$'s which are equivalent modulo $\mathfrak{p}_{i}$ are $\alpha_{i}$ and $\alpha_{d'}$; let $m_{i} \geq 1$ be the maximal integer such that $\mathfrak{p}_{i}^{m_{i}} \mid (\alpha_{d'} - \alpha_{i})$.  Let $n = \max \{v_{2}(m_{i})\}_{i = 1}^{d' - 2}$.

a) For any prime $\ell$ different from all of the residue characteristics of the $\mathfrak{p}_{i}$'s, the Lie subgroup $G_{\ell} \subseteq \GSp(T_{\ell}(J))$ is open.  In particular, $G_{2} \subseteq \GSp(T_{2}(J))$ is open.

b) If $n' := v_{2}(m_{d' - 1}) \leq n$ or if $d' = 2g + 2$, we have $\Gamma(2) \supseteq G_{2} \cap \Sp(T_{2}(J)) \supsetneq \Gamma(2^{2n + 2})$.  Otherwise, we have $\Gamma(2) \supseteq G_{2} \cap \Sp(T_{2}(J)) \supsetneq \Gamma(2^{n + n' + 2})$.  Moreover, if $d' = 4$, then $G_{2} \cap \Sp(T_{2}(J)) \supseteq \Gamma(2^{\max \{n, n'\} + 1})$.

\end{thm}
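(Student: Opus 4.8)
The plan is to read off from each prime $\mathfrak{p}_i$ an explicit power of a transvection lying in $G_2$ via Proposition \ref{prop Galois action local}, and then to feed these elements into Propositions \ref{prop open subgroup} and \ref{prop open subgroup degree 4}. First, note that $G_2$ reduces to the identity modulo $2$: since every $\alpha_i$ lies in $\mathcal{O}_K \subseteq K$, all branch points of $C$ are $K$-rational, so $G_K$ fixes $J[2]$ pointwise, which gives the inclusion $G_2 \subseteq \Gamma(2)$ in the statement. For the local input I would fix $i$ with $1 \le i \le d'-1$ and base change $C$ to the strictly Henselian local field $\mathcal{K}_{\mathfrak{p}_i}$, of odd residue characteristic. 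The hypotheses of Proposition \ref{prop Galois action local} hold, since exactly the two roots $\alpha_i, \alpha_{d'}$ become congruent modulo the maximal ideal of $\mathcal{R}_{\mathfrak{p}_i}$, with $v_{\mathfrak{p}_i}(\alpha_{d'} - \alpha_i) = m_i$; moreover we may take $\ell = 2$ there, since the proof of that proposition produces the relevant automorphism acting integrally on $H_1(C(\cc),\zz) \otimes \widehat{\zz}^{(p')}$, which has $T_2(J)$ as a direct factor because $p \ne 2$. This shows that the image of $G_{K,\mathfrak{p}_i}$ acting on $T_2(J)$ is topologically generated by $T_{c_i}^{2m_i}$, where $c_i \in H_1(C(\cc),\zz)$ is represented by a loop on $\proj_{\cc}^{1}$ separating $\{\alpha_i,\alpha_{d'}\}$ from the remaining branch points; since $\mathcal{R}_{\mathfrak{p}_i}$ is strictly Henselian, $G_{K,\mathfrak{p}_i}$ is the inertia subgroup at the place over $\mathfrak{p}_i$ cut out by the chosen embedding, so $T_{c_i}^{2m_i} \in G_2$.

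Next I would identify each $c_i$ modulo $2$ by labeling the $2g+2$ branch points so that $\alpha_{d'}$ plays the role of $z_{2g+1}$ in Definition \ref{dfn symplectic basis} and $\alpha_i$ that of $z_i$ for $1 \le i \le d'-2$: this is automatic when $d' = 2g+1$, where $\alpha_{d'} = \alpha_{2g+1}$ and $z_{2g+2} = \infty$, and requires only the relabeling $\alpha_{d'-1} \mapsto z_{2g+2}$ when $d' = 2g+2$. Fixing a symplectic basis $\{a_1,\dots,a_g,b_1,\dots,b_g\}$ of $H_1(C(\cc),\zz)$ adapted to this ordering, whose image in $T_2(J)$ is symplectic for $e_2$, the coalescing pair at $\mathfrak{p}_i$ is $\{z_i,z_{2g+1}\}$ for $1 \le i \le d'-2$, so the ``particular case'' of Proposition \ref{prop Galois action local} shows $c_i$ is congruent modulo $2$ to exactly the combination of $a_j$'s and $b_j$'s appearing in the hypothesis of Proposition \ref{prop open subgroup}. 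When $d' = 2g+1$ this also covers the remaining index $i = d'-1 = 2g$; when $d' = 2g+2$ one gets instead $c_{d'-1} \equiv a_1 + \dots + a_g$ modulo $2$ (the complementary partition of the branch points), a class needed only in the degree-$4$ case.

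For the assembly I would observe that the odd part of $2m_i$ is a unit in $\zz_2$, so $T_{c_i}^{2^{v_2(m_i)+1}}$ lies in the closure of $\langle T_{c_i}^{2m_i}\rangle \subseteq G_2$; raising to further $2$-power exponents gives $T_{c_i}^{2^{n+1}} \in G_2$ for $1 \le i \le d'-2$ and $T_{c_{d'-1}}^{2^{n'+1}} \in G_2$. If $d' = 2g+1$, Proposition \ref{prop open subgroup} applied to $c_1,\dots,c_{2g}$ with parameters $n+1$ and $n'+1$ yields $G_2 \supseteq \Gamma(2^{2n+2})$ when $n' \le n$ and $G_2 \supseteq \Gamma(2^{n+n'+2})$ otherwise. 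If $d' = 2g+2$, applying it to $c_1,\dots,c_{2g}$ — the classes coming from $\mathfrak{p}_1,\dots,\mathfrak{p}_{d'-2}$, all with exponent $2^{n+1}$ — yields $G_2 \supseteq \Gamma(2^{2n+2})$ unconditionally. The inclusions are strict because $G_2 \not\subseteq \Sp(T_2(J))$, as recalled in the introduction, while every $\Gamma(2^k)$ is contained in $\Sp(T_2(J))$. Finally, if $d' = 4$ then $g = 1$ and $d' = 2g+2$, and the labeling above gives $c_1 \equiv a_1+b_1$, $c_2 \equiv b_1$, $c_3 \equiv a_1$ modulo $2$; raising $T_{c_1}^{2m_1}, T_{c_2}^{2m_2}, T_{c_3}^{2m_3}$ to $2$-power exponents so that all three exponents equal $2^{\max\{n,n'\}+1}$ and applying Proposition \ref{prop open subgroup degree 4} gives $G_2 \supseteq \Gamma(2^{\max\{n,n'\}+1})$.

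I expect the main obstacle to be organizational rather than conceptual: arranging a single labeling of the branch points compatible at once with Definition \ref{dfn symplectic basis}, with the ``particular case'' of Proposition \ref{prop Galois action local}, and with the hypotheses of Propositions \ref{prop open subgroup} and \ref{prop open subgroup degree 4} — especially the correct treatment of the distinguished prime $\mathfrak{p}_{d'-1}$ and of the split between $d' = 2g+1$ and $d' = 2g+2$ — together with the routine but necessary check that Proposition \ref{prop Galois action local} may be invoked with $\ell = 2$.
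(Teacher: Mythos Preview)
Your proposal is correct and follows essentially the same route as the paper: for each $\mathfrak{p}_i$ extract $T_{c_i}^{2m_i}\in G_2$ via Proposition~\ref{prop Galois action local}, pin down the $c_i$ modulo~$2$ by choosing the branch-point labeling with $\alpha_{d'}\leftrightarrow z_{2g+1}$ (and $\alpha_{d'-1}\leftrightarrow z_{2g+2}$ when $d'=2g+2$), pass to $2$-power exponents, and invoke Propositions~\ref{prop open subgroup} and~\ref{prop open subgroup degree 4}. Your observation that Proposition~\ref{prop Galois action local} applies with $\ell=2$ because $p\neq 2$ is exactly what the paper uses implicitly.

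One internal inconsistency to fix: you first argue $G_2\subseteq\Gamma(2)$ from the $K$-rationality of the branch points, and later argue strictness from $G_2\not\subseteq\Sp(T_2(J))$. Since $\Gamma(2)\subseteq\Sp(T_2(J))$ by the paper's definition, these two claims contradict each other. The cyclotomic argument is the correct one: over a number field $G_2\not\subseteq\Sp(T_2(J))$, so the assertion ``$\Gamma(2)\supseteq G_2$'' in the statement should be read loosely (the paper's own proof does not address it) as ``$G_2$ reduces to the identity on $J[2]$,'' i.e.\ $G_2$ lies in the level-$2$ congruence subgroup of $\GSp(T_2(J))$ rather than of $\Sp(T_2(J))$. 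With that reading your strictness argument stands; alternatively, strictness follows directly from $T_{c_1}^{2^{v_2(m_1)+1}}\in G_2\smallsetminus\Gamma(2^{v_2(m_1)+2})$ together with $v_2(m_1)+1\le n+1<2n+2$.
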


\begin{proof}

Using the embedding $\bar{K} \hookrightarrow \cc$ fixed at the beginning of \S\ref{S2}, we identify $T_{\ell}(J)$ with $H_{1}(C(\cc), \zz) \otimes \zz_{\ell}$ as we did in \S\ref{S2}.  Let $\{a_{1}, ... , a_{g}, b_{1}, ... , b_{g}\}$ be a symplectic basis of $H_{1}(C(\cc), \zz)$ as in Definition \ref{dfn symplectic basis}(b) with $(z_{1}, ... , z_{2g + 2}) = (\alpha_{1}, ... , \alpha_{2g + 1}, \infty)$ if $d'$ is odd and $(z_{1}, ... , z_{2g + 2}) = (\alpha_{1}, ... , \alpha_{2g}, \alpha_{2g + 2}, \alpha_{2g + 1})$ if $d'$ is even.  Then Proposition \ref{prop Galois action local} says that for $1 \leq i \leq 2g$, the image of $G_{K, \mathfrak{p}_{i}} \subset G_{K}$ in $\GSp(T_{\ell}(J))$ contains $t_{c_{i}}^{2m_{i}}$, where $t_{c_{i}} \in \Sp(T_{\ell}(J))$ is the transvection with respect to an element $c_{i} \in T_{2}(J)$ which is equivalent modulo $2$ to the formula given in (\ref{eq c_i}).  Meanwhile, if $d'$ is even, the image of $G_{K, \mathfrak{p}_{2g + 1}}$ contains $t_{c_{d'}}^{2m_{2g + 1}}$ for some other element $c_{d'} \in T_{\ell}(J)$ corresponding to the lift of a loop separating $\{\alpha_{2g + 1}, \alpha_{2g + 2}\}$ from its complement in $\{\alpha_{j}\}_{j = 1}^{2g + 2}$.  In any case, we have $t_{c_{1}}^{2m_{1}}, ... , t_{c_{2g}}^{2m_{2g}} \in G_{\ell}$ for all $\ell$.  It then follows from Proposition \ref{prop open subgroup} applied to $M := H_{1}(C(\cc), \zz)$ that $G_{\ell}$ is open for all $\ell$, and moreover that $G_{2} \supset \Gamma(2^{2n + 2})$ if $n' \leq n$ or if $d' = 2g + 2$ and $G_{2} \supset \Gamma(2^{n + n' + 2})$ otherwise.

If $d' = 4$, then in particular, we have $t_{c_{1}}^{2^{\max\{n, n'\} + 1}}, t_{c_{2}}^{2^{\max\{n, n'\} + 1}}, t_{c_{3}}^{2^{\max\{n, n'\} + 1}} \in G_{2}$ with $c_{1}, c_{2}, c_{3}$ as described above.  Clearly we have $c_{1} \equiv a_{1} + b_{1}$, $c_{2} \equiv b_{1}$, and $c_{3} \equiv a_{1}$ (mod $2$), and so Proposition \ref{prop open subgroup degree 4} implies that $G_{2} \supseteq \Gamma(2^{\max \{n, n'\} + 1})$.

\end{proof}

\begin{rmk} \label{rmk index bound}

We note that it is generally not difficult to compute the order of $G'_{2} / \Gamma(2^{2n + 2})$ or $G'_{2} / \Gamma(2^{n + n' + 2})$, where $G'_{2} \subseteq G_{2}$ is the subgroup generated by the powers of transvections given in the proof above.  Therefore, one may improve the upper bound for $[\Gamma(2) : G_{2}]$ which directly follows from the statement of Theorem \ref{thm several primes}.  For example, we have $[\Gamma(2) : G_{2}] \leq 2^{(2n + 1)(2g^{2} + g) - (n + 1)(d' - 1)}$ in the case that $n = n'$.

\end{rmk}

\begin{ex}[Legendre curve] \label{ex Legendre}

For any $\lambda \in \mathcal{O}_{K} \smallsetminus \{0, 1\}$, let $E_{\lambda}$ be the elliptic curve over $K$ given by $y^{2} = x(x - 1)(x - \lambda)$.  Suppose that there exist (necessarily distinct) primes $\mathfrak{p}_{1}$ and $\mathfrak{p}_{2}$ of $K$ not lying over $(2)$ and integers $m_{1}, m_{2}, \geq 1$ such that $\mathfrak{p}_{1}^{m_{1}}$ exactly divides $(\lambda)$ and $\mathfrak{p}_{2}^{m_{2}}$ exactly divides $(\lambda - 1)$.  Then Theorem \ref{thm several primes} tells us that the $2$-adic Galois image $G_{2}$ (strictly) contains $\Gamma(2^{v_{2}(m_{1}) + v_{2}(m_{2}) + 2})$.  In the case that $m_{1} = m_{2} = 1$ (e.g. $K = \qq$, $\lambda = 6$, $\mathfrak{p}_{1} = (3)$, $\mathfrak{p}_{2} = (5)$), we get $\Gamma(2) \supset G_{2} \cap \Sp(T_{2}(E_{\lambda})) \supsetneq \Gamma(4)$ and can therefore directly compute the precise subgroup $G_{2} \cap \Sp(T_{2}(E_{\lambda})) \subset \Gamma(2)$ using the well-known fact that the $4$-division field $K(E_{\lambda}[4])$ is generated over $K$ by $\{\sqrt{-1}, \sqrt{\lambda}, \sqrt{\lambda - 1}\}$.

It is also possible to prove the statement of Proposition \ref{prop Galois action local}, thus determining $G_{2}$, in the particular cases of $C = E_{\lambda}$ over $\mathcal{K}_{\mathfrak{p}_{1}}$ and over $\mathcal{K}_{\mathfrak{p}_{2}}$ using formulas for generators of $2$-power division fields of $E_{\lambda}$ found in \cite{yelton2015dyadic}, as the author has done in \cite[\S3.4]{yelton2015hyperelliptic}.

\end{ex}

We now prove Theorems \ref{thm main1} and \ref{thm main2} together.

\begin{proof}[Proof (of Theorems \ref{thm main1} and \ref{thm main2})]

First assume the notation and hypotheses of Theorem \ref{thm main1}.  Let $\alpha_{1}, ... , \alpha_{d}$ denote the roots of $f$, and write $L = K(\alpha_{1}, ... , \alpha_{d})$ for the splitting field of $f$ over $K$.  Note that $\Gal(L / K)$ acts transitively on the $\alpha_{i}$'s since $f$ is irreducible.  It then follows from the well-known description of the $2$-division field of a hyperelliptic Jacobian (see for instance \cite[Corollary 2.11]{mumford1984tata}) that $G_{K}$ does not fix the $2$-torsion points of $J$ and so $G_{2}$ is not contained in $\Gamma(2)$, while the image of $\Gal(\bar{K} / L)$ under $\rho_{2}$ coincides with $G_{2} \cap \Gamma(2)$.

The fact that $\mathfrak{p} \nmid (2\Delta)$ implies that the extension $L / K$ is not ramified at $\mathfrak{p}$, and so $\mathfrak{p}$ splits into a product $\mathfrak{p}_{1} ... \mathfrak{p}_{r}$ of distinct primes in $L$, for some integer $r$ dividing $[L : K]$.  Then since $\mathfrak{p}^{m}$ exactly divides $(f(\lambda)) = \prod_{i = 1}^{d}(\lambda - \alpha_{i})$, we have $\mathfrak{p}_{i} \mid (\lambda - \alpha_{i})$ for some $i$; we assume without loss of generality that $\mathfrak{p}_{1} \mid (\lambda - \alpha_{1})$.  Then $\mathfrak{p}_{1}$ cannot divide $(\lambda - \alpha_{i})$ for any $i \in \{2, ... , d\}$, because otherwise for such an $i$ we would have $\mathfrak{p}_{1} \mid (\alpha_{i} - \alpha_{1}) \mid (\Delta)$, which contradicts the hypothesis that $\mathfrak{p} \nmid (2\Delta)$.  It follows that $\mathfrak{p}_{1}^{m}$ exactly divides $(\lambda - \alpha_{1})$.  Then by applying elements of $\Gal(L / K)$ that take $\alpha_{1}$ to each $\alpha_{i}$, we get other primes lying over $\mathfrak{p}$ whose $m$th powers exactly divide the ideals $(\lambda - \alpha_{i})$; we assume without loss of generality that $\mathfrak{p}_{i}^{m}$ exactly divides $(\lambda - \alpha_{i})$ for $1 \leq i \leq d$.  Now since the $\mathfrak{p} \nmid (2\Delta)$ hypothesis implies that none of the $\mathfrak{p}_{i}$'s lie over $(2)$, we can apply Theorem \ref{thm several primes} with $K$ replaced by $L$, $d' = d + 1$, $\alpha_{d'} = \lambda$, and $m_{1} = ... = m_{d' - 1} = m$ to get the statements of Theorem \ref{thm main1}.

Now assume the notation and hypotheses of Theorem \ref{thm main2}.  Then the argument for proving Theorem \ref{thm main2} is the same except that when applying Theorem \ref{thm several primes} we choose $\mathfrak{p}_{d' - 1}$ to be a prime of $L$ lying over $\mathfrak{p}'$, and we put $d' = d + 2$, $\alpha_{d'} = \lambda$, $\alpha_{d' - 1} = \lambda'$, $m = m_{1} = ... = m_{d' - 2}$, and $m' = m_{d' - 1}$.

\end{proof}

\section{Realizing uniform boundedness along one-parameter families} \label{S5}

Fix an irreducible monic polynomial $f \in \mathcal{O}_{K}[x]$ of degree $d \geq 2$.  It follows from Cadoret and Tamagawa's results \cite[Theorems 1.1 and 5.1]{cadoret2012uniform} that for all but finitely many $\lambda \in K$, the $\ell$-adic Galois image $G_{\ell, \lambda}$ associated to the Jacobian of the curve given by $y^{2} = f(x)(x - \lambda)$ is open in the $\ell$-adic Galois image $G_{\ell, \eta}$ associated to the generic fiber of the family parametrized by $\lambda$.  These theorems also assert that there is some integer $B \geq 1$ depending only on $f$ and $\ell$ such that the index of $G_{\ell, \lambda}$ in $G_{\ell, \eta}$ is bounded by $B$ for all but finitely many $\lambda \in K$.  The following theorem treats the $\ell = 2$ case: we recover the openness result when $d \geq 4$ and explicitly provide the aforementioned uniform bound when $d$ is even.  (Note that for the elliptic curve case, where $d \in \{ 2, 3\}$, such openness results are already known from the celebrated Open Image Theorem of Serre given by \cite[IV-11]{serre1989abelian}, while uniform bounds are given by \cite[Theorem 1.3]{arai2008uniform}.)  It is interesting to note that Faltings' Theorem is used both in the proof of \cite[Theorem 1.1]{cadoret2012uniform} and in our proof of the theorem below.

\begin{thm} \label{thm uniform bounds}

Let $f \in \mathcal{O}_{K}[x]$ be an irreducible monic polynomial of degree $d \geq 3$ with discriminant $\Delta$.  For each $\lambda \in K$, let $J_{\lambda}$ denote the Jacobian of the hyperelliptic curve $C_{\lambda}$ with defining equation $y^{2} = f(x)(x - \lambda)$, and write $G_{2, \lambda} \subseteq \GSp(T_{2}(J_{\lambda}))$ for the image of the associated $2$-adic Galois representation.

a) If $d \geq 4$, the Lie subgroup $G_{2, \lambda} \subseteq \GSp(T_{2}(J_{\lambda}))$ is open for all but finitely many $\lambda \in K$.

b) If $d = 3$ (resp. if $d \geq 5$), then $G_{2, \lambda} \cap \Sp(T_{2}(J_{\lambda})) \supsetneq \Gamma(4)$ for all but finitely many $\lambda \in \mathcal{O}_{K}[(2\Delta)^{-1}] \cdot (K^{\times})^{4}$ (resp. all but finitely many $\lambda \in \mathcal{O}_{K}[(2\Delta)^{-1}] \cdot (K^{\times})^{2}$).

c) If $d = 4$, then we have $G_{2, \lambda} \cap \Sp(T_{2}(J_{\lambda})) \supsetneq \Gamma(16)$ for all but finitely many $\lambda \in K$.  If $d \geq 6$ is even, then we have $G_{2, \lambda} \cap \Sp(T_{2}(J_{\lambda})) \supsetneq \Gamma(4)$ for all but finitely many $\lambda \in K$.

\end{thm}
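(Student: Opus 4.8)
The plan is to derive all three parts from Theorem~\ref{thm main1}, together with a version of it at a prime where the parameter collides with $\infty$, by showing that the set of $\lambda$ for which no suitable prime exists is finite — using Siegel's theorem for part (a) and Faltings' theorem for the sharper statements. Since $f$ is irreducible of degree $\geq 2$ it has no root in $K$, so $f(\lambda)\neq 0$ for all $\lambda\in K$. By Theorem~\ref{thm main1}, if there is a prime $\mathfrak p\nmid(2\Delta)$ with $\mathfrak p^m$ exactly dividing $(f(\lambda))$ (i.e. $\mathfrak p^m\mid(f(\lambda))$ but $\mathfrak p^{m+1}\nmid(f(\lambda))$) then $G_{2,\lambda}$ is open and $G_{2,\lambda}\cap\Sp(T_2(J_\lambda))\supsetneq\Gamma(2^{2v_2(m)+2})$, or $\supsetneq\Gamma(2^{v_2(m)+1})$ when $d=3$. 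Hence part (a) follows as soon as we know that for all but finitely many $\lambda\in K$ some prime not dividing $(2\Delta)$ divides $(f(\lambda))$; the $\Gamma(4)$ conclusions of (b) and (c) follow as soon as such a prime can be found with $v_2(m)=0$; and the $d=3$ clause of (b) and the $d=4$ clause of (c) follow as soon as one can be found with $v_2(m)\leq1$. When $d$ is even I would also use the following variant: feeding Lemma~\ref{lemma branch point infty} (after a Möbius change of coordinates swapping $\infty$ with a suitable integral point) into Proposition~\ref{prop Galois action local}, one sees that at a prime $\mathfrak q$ in the denominator of $\lambda$ which is prime to a fixed finite set $S$ of primes (containing those dividing $2\Delta$ and a few auxiliary ones), the branch points $\lambda$ and $\infty$ coalesce to order $m:=-v_{\mathfrak q}(\lambda)$, so $G_{2,\lambda}$ contains a transvection $T_c^{2m}$; again $m$ odd gives $\Gamma(4)$ and $4\nmid m$ gives $\Gamma(16)$.

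Next I would identify the exceptional $\lambda$. Let $n\in\{1,2,4\}$ be the exponent relevant to the statement at hand ($n=2$ for the $\Gamma(4)$ cases, $n=4$ for $d\in\{3,4\}$, and $n=1$ for part (a)). Using that $\mathcal O_K$ is a PID, the subgroup of $K^\times/(K^\times)^n$ generated by $\mathcal O_K^\times$ and the primes in $S$ is finite; fix integral representatives $e_1,\dots,e_r\in\mathcal O_K$ of it. For part (a), $\lambda$ is exceptional exactly when $f(\lambda)$ is a unit of $\mathcal O_K[(2\Delta)^{-1}]$, and since a pole of $\lambda$ away from $S$ would force a pole of $f(\lambda)$ there, this already forces $\lambda\in\mathcal O_K[(2\Delta)^{-1}]$, i.e. $\lambda$ is an $S$-integral point of $\proj^1_K\smallsetminus\{\alpha_1,\dots,\alpha_d,\infty\}$. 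For the congruence-subgroup statements, a valuation bookkeeping — using the even-$d$ coalescence analysis above to dispose of any $\lambda$ having a pole of $n$-free order at a prime outside $S$, and, when $d$ is odd, the hypothesis $\lambda\in\mathcal O_K[(2\Delta)^{-1}]\cdot(K^\times)^n$ to exclude such poles a priori — shows that an exceptional $\lambda$ satisfies $v_{\mathfrak p}(f(\lambda))\equiv0\pmod n$ for every $\mathfrak p\notin S$, whence $f(\lambda)=e_i\,w^n$ for some $i$ and some $w\in K^\times$.

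To conclude finiteness: in part (a), $\proj^1$ minus $d+1\geq5$ points has at least three punctures, so Siegel's theorem on $S$-integral points makes the exceptional set finite. For the sharper statements, the relation $f(\lambda)=e_i w^n$ exhibits $(\lambda,e_iw)$ as a $K$-rational point of the smooth projective model $D_{e_i}$ of $Y^n=e_i^{\,n-1}f(X)$. A Riemann--Hurwitz computation for the degree-$n$ cover $D_{e_i}\to\proj^1_X$ — totally ramified over each of the $d$ roots of $f$, with the behaviour over $\infty$ governed by $\gcd(n,d)$ — gives genus $\lfloor(d-1)/2\rfloor$ when $n=2$, which is $\geq2$ for $d\geq5$, and genus $3$ when $n=4$ and $d\in\{3,4\}$. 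So in every relevant case $D_{e_i}$ has genus $\geq2$; Faltings' theorem bounds $D_{e_i}(K)$, and since there are only finitely many $e_i$ the set of exceptional $\lambda$ is finite, proving (b), (c), and the openness statement (a) (with $d=4$ handled by (a) and (c), and $d=3$ by (b)).

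I expect the main difficulty to be the careful handling of non-integral $\lambda$. For part (a) it takes care of itself, since a polynomial has neither a pole where its argument is integral nor an integral value where its argument has a pole. For the congruence-subgroup statements, poles of $\lambda$ must be confronted directly: when $d$ is even, a pole of odd (resp. $4$-free) order at a prime outside $S$ is precisely what produces the transvection needed, via the coalescence of $\lambda$ with $\infty$, and this is exactly why part (c) holds for all $\lambda\in K$; when $d$ is odd there is no useful transvection at such a place and an odd-order pole is invisible to the integral ideal $(f(\lambda))$, which is why part (b) must be restricted to $\lambda\in\mathcal O_K[(2\Delta)^{-1}]\cdot(K^\times)^n$. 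The two mildly technical ingredients I anticipate are (i) making the $\infty$-coalescence variant of Proposition~\ref{prop Galois action local} fully rigorous, including the identification of the exponent with $-v_{\mathfrak q}(\lambda)$ and the check that the quadratic twist accompanying the change of coordinates is trivial over the strictly Henselian base (where units are squares), and (ii) pinning down the finitely many auxiliary primes (those dividing $N_{K/\qq}(f(0))$ and $\mathrm{disc}\bigl(X^d f(1/X)\bigr)$, say) that must be adjoined to $S$ so that both the coalescence analysis and the finite-subgroup argument run without exceptional primes interfering.
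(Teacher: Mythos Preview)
Your overall strategy---apply Theorem~\ref{thm main1} and then show the exceptional set is finite via Faltings on the auxiliary curves $\xi y^{s}=f(x)$---matches the paper's, and your genus computations for those curves are correct. The gap is in your treatment of non-integral $\lambda$, which is precisely where the hypothesis that $\mathcal{O}_{K}$ is a PID is used.

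The paper handles an arbitrary $\lambda=\mu/\nu\in K$ (with $\mu,\nu\in\mathcal{O}_{K}$ coprime) by the substitution $(x,y)\mapsto(\nu x,\nu^{(d+1)/2}y)$, which carries $C_{\lambda}$ over $K(\sqrt{\nu})$ to the curve $y^{2}=h(x)(x-\mu)$ with $h(x)=\nu^{d}f(x/\nu)\in\mathcal{O}_{K}[x]$ of discriminant $\nu^{2d^{2}-d}\Delta$. Since $h(\mu)=\nu^{d}f(\lambda)$ is automatically coprime to $\nu$, Theorem~\ref{thm main1} applied to this integral model needs only a prime $\mathfrak p\nmid 2\Delta$ dividing $\nu^{d}f(\lambda)$; failure of this forces $f(\lambda)\in\Sigma\cdot(K^{\times})^{d}$ (resp.\ $\Sigma\cdot(K^{\times})^{s}$), and Faltings finishes. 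Your substitutes for this reduction do not work. In part~(a), your claim that an exceptional $\lambda$ must be $S$-integral is circular: if $\lambda$ has a pole at some $\mathfrak q\notin S$ while $v_{\mathfrak p}(f(\lambda))=0$ at every other $\mathfrak p\notin S$, then $f(\lambda)$ is not an $S$-unit, yet no prime is available for Theorem~\ref{thm main1}---so Siegel on $\proj^{1}$ minus $d+1$ points misses such $\lambda$ entirely (and one really does need Faltings here, as the paper uses). In part~(c), your coalescence-with-$\infty$ argument at a pole of $\lambda$ produces only a \emph{single} transvection $T_{c}^{2m}$; a lone transvection generates a rank-one abelian group and contains no $\Gamma(2^{N})$. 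The reason Theorem~\ref{thm main1} yields a congruence subgroup from one prime is that irreducibility of $f$ lets the Galois group transport the coalescence $\lambda\equiv\alpha_{1}$ to all $d$ roots, giving $d$ transvections; there is no analogous mechanism for the pair $\{\lambda,\infty\}$. So the $\infty$-coalescence cannot replace the change of variables, and without it your argument does not cover $\lambda\notin\mathcal{O}_{K,S}$.
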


\begin{proof}

Fix a set of integral ideal representatives for the group of ideal classes of $K$, and let $S$ be the (finite) set of primes of $K$ which divide the ideals in this set.  Let $\Sigma \subset K^{\times}$ denote the multiplicative subgroup generated by the elements $\xi \in \mathcal{O}_{K}$ such that the ideal $(\xi)$ is divisible only by primes which lie in $S$ or divide $(2\Delta)$.  We claim that $\Sigma$ is finitely generated.  Indeed, there is an obvious map from $\Sigma$ to the free $\zz$-module formally generated by the (finite) set of prime ideals of $\mathcal{O}_{K}$ which lie in $S$ or divide $(2\Delta)$, and its kernel is the unit group $\mathcal{O}_{K}^{\times}$, which is also well known to be finitely generated.

Choose any $\lambda \in K$, which we may write as $\mu / \nu$ for some $\mu, \nu \in \mathcal{O}_{K}$ such that the common prime divisors of the ideals $(\mu)$ and $(\nu)$ all lie in $S$.  Let $h(x) = \nu^{d}f(\nu^{-1} x)$, which is a monic polynomial in $\mathcal{O}_{K}[x]$; note that the discriminant of $h$ is equal to $\nu^{d^{2} - d}\Delta$.  Then there is a $K(\sqrt{\nu})$-isomorphism from $C_{\lambda}$ to the hyperelliptic curve $C_{\lambda}'$ whose defining equation is $y^{2} = \nu^{d}f(\nu^{-1}x)(x - \mu) \in \mathcal{O}_{K}[x]$, given by $(x, y) \mapsto (\nu x, \nu^{(d + 1) / 2}y)$.  Thus, letting $J_{\lambda}'$ denote the Jacobian of $C_{\lambda}'$, the $2$-adic Tate modules $T_{2}(J_{\lambda})$ and $T_{2}(J_{\lambda}')$ are isomorphic as $\Gal(\bar{K} / K(\sqrt{\nu}))$-modules.  In light of this, it suffices to replace $G_{2, \lambda}$ with the $2$-adic Galois image associated to $J_{\lambda}'$.

Now Theorem \ref{thm main1} implies that if there is a prime element $\mathfrak{p}$ dividing $(\nu^{d}f(\lambda))$ but not $(2\nu^{d^{2} - d}\Delta)$, then $G_{2, \lambda} \subset \GSp(T_{2}(J_{\lambda})) \cong \GSp(T_{2}(J_{\lambda}'))$ is open.  It follows from the fact that $\mu$ and $\nu$ are coprime away from $S$ that $(\nu^{d}f(\lambda))$ is not divisible by any prime dividing $(\nu)$ which does not lie in $S$, so a prime $\mathfrak{p}$ satisfying the above condition does not divide $(2\Delta)$ and is not in $S$.  The existence of such a prime is equivalent to the condition that $\nu^{d} f(\lambda) \notin \Sigma$, so to prove part (a) it suffices to show that $f(\lambda) \in \Sigma \cdot (K^{\times})^{d}$ for only finitely many $\lambda \in K$.  Note that any such $\lambda$ yields a solution $(x = \lambda, y) \in K \times K$ to an equation of the form $\xi y^{d} = f(x)$ with $\xi \in \Sigma'$, where $\Sigma' \subset \Sigma$ is a set of representatives of elements in $\Sigma / \Sigma^{d}$.  If $d \geq 4$, then an application of the Riemann-Hurwitz formula shows that such an equation defines a smooth curve of genus $\geq 2$, and then Faltings' Theorem implies that there are only finitely many solutions defined over $K$ to each such equation.  Therefore, to prove (a) it suffices to show that there are only finitely many choices of $\xi$.  But this follows from the fact that $\Sigma / \Sigma^{d}$ is finite because $\Sigma$ is finitely generated.

Now assume that $d = 3$ or $d \geq 5$ and that $\lambda \in \mathcal{O}_{K}[(2\Delta)^{-1}] \cdot (K^{\times})^{s}$, with $s = 4$ if $d = 3$ and $s = 2$ otherwise.  Then if we write $\lambda = \mu / \nu$ as above, we have $\nu \in \Sigma \cdot (K^{\times})^{s}$.  Suppose that $\nu^{d} f(\lambda) \notin \Sigma \cdot (K^{\times})^{s}$, which is equivalent to saying that $f(\lambda) \notin \Sigma \cdot (K^{\times})^{s}$.  Then there is a prime element $\mathfrak{p}$ dividing $(\nu^{d} f(\lambda))$ but not $(2\nu^{d^{2} - d}\Delta)$ (so $\mathfrak{p} \nmid (2\Delta)$ and $\mathfrak{p} \notin S$ as before) and such that the maximum integer $m \geq 1$ with $\mathfrak{p}^{m} \mid (\nu^{d}f(\lambda))$ satisfies $v_{2}(m) \leq v_{2}(s) - 1$.  Then Theorem \ref{thm main1} implies that $G_{2, \lambda} \cap \Sp(T_{2}(J_{\lambda})) \supsetneq \Gamma(4)$ both when $d = 3$ and when $d \geq 5$.  Therefore, to prove (b) it suffices to show that $f(\lambda) \in \Sigma \cdot (K^{\times})^{s}$ for only finitely many $\lambda \in K$.  This follows from the same argument as above, on observing by Riemann-Hurwitz that the curves given by $\xi y^{s} = f(x)$ have genus $\geq 2$.

Finally, assume that $d \geq 4$ is even and choose any $\lambda = \mu / \nu \in K$ as before.  Let $s = 4$ if $d = 4$ and let $s = 2$ otherwise.  Suppose that $\nu^{d} f(\lambda) \notin \Sigma \cdot (K^{\times})^{s}$, which in both cases is equivalent to saying that $f(\lambda) \notin \Sigma \cdot (K^{\times})^{s}$.  Then there is a prime element $\mathfrak{p}$ dividing $(\nu^{d} f(\lambda))$ but not $(2\nu^{d^{2} - d}\Delta)$ (so $\mathfrak{p} \nmid (2\Delta)$ and $\mathfrak{p} \notin S$ as before) and such that the maximum integer $m \geq 1$ with $\mathfrak{p}^{m} \mid (\nu^{d}f(\lambda))$ satisfies $v_{2}(m) \leq v_{2}(s) - 1$.  Then Theorem \ref{thm main1} implies that $G_{2, \lambda} \cap \Sp(T_{2}(J_{\lambda}))$ strictly contains $\Gamma(16)$ (resp. $\Gamma(4)$).  Therefore, to prove (c) it suffices to show that $f(\lambda) \in \Sigma \cdot (K^{\times})^{s}$ for only finitely many $\lambda \in K$, which likewise follows from checking that the curves given by $\xi y^{s} = f(x)$ have genus $\geq 2$.

\end{proof}

\begin{rmk} \label{rmk 4-torsion}

Parts (b) and (c) of Theorem \ref{thm uniform bounds} say that for a given polynomial $f$ of degree $d \neq 4$, there are many elements $\lambda \in K$ such that $G_{2, \lambda} \supsetneq \Gamma(4)$.  In these cases, it is always possible to compute the full structure of $G_{2}$ and determine its index in $\GSp(T_{2}(J_{\lambda}))$ by considering the Galois action on the $4$-torsion subgroup of $J$ and using formulas for the generators of the $4$-division field $K(J[4])$ over $K$.  Such formulas are provided by \cite[Proposition 3.1]{yelton2015images} in the case that $d$ is odd and are found in \cite[\S2.4]{yelton2015hyperelliptic} in the case that $d$ is even.

\end{rmk}

\section{An alternate approach to producing the transvections} \label{S6}

In this final section, we briefly discuss another method of proving Proposition \ref{prop Galois action local} in the $\ell = 2$ case, suggested by one of the referees, which involves N\'{e}ron models of Jacobians with semi-abelian reduction rather than Galois actions on \'{e}tale fundemantal groups of punctured projective lines.

We assume all of the notation of \S\ref{S2}, in particular that we have a strictly Henselian DVR $\mathcal{R}_{\mathfrak{p}}$ with uniformizer $\pi$ and are considering the hyperelliptic curve $C$ defined over its field of fractions $\mathcal{K}_{\mathfrak{p}}$; to simplify notation, we write $\mathcal{R}$ and $\mathcal{K}$ for $\mathcal{R}_{\mathfrak{p}}$ and $\mathcal{K}_{\mathfrak{p}}$ respectively, and we write $k$ for the residue field $\mathcal{R} / \mathfrak{p}$.  As in \S\ref{S2}, our curve $C$ is defined by the equation $y^2 = \prod_{i = 1}^d (x - \alpha_i)$, where the roots $\alpha_i$ all lie in $\mathcal{R}$; for $1 \leq i \leq d$, let $\bar{\alpha}_i \in k$ denote the image modulo $\mathfrak{p}$ of $\alpha_i$.  We assume that the elements $\bar{\alpha}_i \in k$ are all distinct except that $\bar{\alpha}_1 = \bar{\alpha}_2$ and let $m \geq 1$ be the greatest integer such that $\pi^m | (\alpha_1 - \alpha_2)$; this is precisely the main hypothesis of Proposition \ref{prop Galois action local}, except that we are assuming without loss of generality that $\alpha_1$ and $\alpha_2$ are the two roots which coalesce in the reduction.  If $d = 2g + 1$, the curve has a single point at infinity which we denote by $\infty \in C(\mathcal{K})$; if $d = 2g + 2$, the curve has two points at infinity which we denote by $\infty_1, \infty_2 \in C(\mathcal{K})$.

The Jacobian $J / \mathcal{K}$ of $C$ admits a \textit{N\'{e}ron model} $\mathcal{J} / \mathcal{R}$, which is a smooth, commutative group scheme.  Writing  and $\mathcal{J}_k / k$ for the special fiber of $\mathcal{J}$, we see that $\mathcal{J}_k$ is a (not necessarily connected) commutative group variety over $k$; we denote the connected component containing the identity by $\mathcal{J}_k^0$.  The hypothesis that only the roots $\alpha_1$ and $\alpha_2$ are equivalent modulo $\mathfrak{p}$ implies that $C$ has semistable reduction at $\mathfrak{p}$.  Therefore, the Jacobian $J / \mathcal{K}$ has semiabelian reduction by \cite[Corollary 9.7.2]{bosch2012neron}, which means that $\mathcal{J}_k^0$ decomposes as an extension of an abelian variety by a torus $\mathcal{T} \subseteq \mathcal{J}_k^0$.

\begin{lemma} \label{lemma component group}

The group of connected components $\mathcal{J}_k / \mathcal{J}_k^0$ is isomorphic to $\zz / 2m\zz$.

\end{lemma}

\begin{proof}

Let $\mathcal{C}' / \mathcal{R}$ be the projective curve defined over $\mathcal{R}$ by the same equations defining $C / \mathcal{K}$.  It is easy to verify that the special fiber of $\mathcal{C}'$ has a node of thickness $2m$ at the point $(\bar{\alpha}_i, 0)$, where $\bar{\alpha}_i = \bar{\alpha}_j$ is the image in $k$ of $\alpha_i, \alpha_j \in \mathcal{R}$.  Let $\mathcal{C} / \mathcal{R}$ be the minimal desingularization of $\mathcal{C}'$.  By \cite[Corollary 10.3.25]{liu2002algebraic}, the special fiber of $\mathcal{C}$ has $2m$ irreducible components, configured in such a way that its \textit{dual graph} $\Gamma(\mathcal{C}_k)$ (whose vertices correspond to irreducible components and whose edges correspond to intersections between two irreducible components) consists of a single cycle of length $2m$.  It then follows from \cite[Remark 9.6.12]{bosch2012neron} that $\mathcal{J}_k / \mathcal{J}_k^0 \cong \zz / 2m\zz$.

\end{proof}

Let $T_2(\mathcal{J}_k)$ denote the $2$-adic Tate module of the commutative algebraic group $\mathcal{J}_k$; we identify it with the submodule $T_2(J)^{G_{\mathcal{K}}} \subset T_2(J)$ of invariants under the absolute Galois group $G_{\mathcal{K}}$ of $\mathcal{K}$ (to see this identification, recall that since $\mathcal{R}$ is strictly Henselian, the absolute Galois group of $\mathcal{K}$ coincides with its inertia subgroup, and use \cite[Proposition 2.2.5]{grothendieck1972modeles}).  Letting $T_2(\mathcal{T})$ be the $2$-adic Tate module of the torus $\mathcal{T}$, we now have a filtration $T_2(\mathcal{T}) \subseteq T_2(J)^{G_{\mathcal{K}}} \subset T_2(J)$.

\begin{lemma} \label{lemma toral part}

The submodule $T_2(\mathcal{T}) \subset T_2(J)$ has rank $1$ and is generated over $\zz_2$ by an element whose image in $T_2(J) / 2T_2(J) = J[2]$ is the class of the divisor $D := (\alpha_i, 0) + (\alpha_j, 0) - 2(\infty) \in \Div^0(C)$ (resp. $(\alpha_i, 0) + (\alpha_j, 0) - (\infty_1) - (\infty_2) \in \Div^0(C)$) if $d = 2g + 1$ (resp. if $d = 2g + 2$).

\end{lemma}

\begin{proof}

By \cite[Corollary 9.7.2]{bosch2012neron}, the identity component $\mathcal{J}^0$ (whose special fiber is $\mathcal{J}_k^0$ and whose generic fiber is $J$) can be identified with the degree-$0$ Picard scheme $\Pic^0(\mathcal{C}) / \mathcal{R}$ of the relative curve $\mathcal{C} / \mathcal{R}$.  In particular, the special fibers $\mathcal{J}_k^0$ and $\Pic^0(\mathcal{C}_k)$ are isomorphic.  Let $\nu : \widetilde{\mathcal{C}}_k \to \mathcal{C}_k$ be a normalization of $\mathcal{C}_k$, and let $\nu^{*} : \Pic^0(\mathcal{C}_k) \to \Pic^0(\widetilde{\mathcal{C}}_k)$ be the induced pullback map.  Now \cite[Example 9.2.8]{bosch2012neron} implies that the torus $\mathcal{T} \subseteq \mathcal{J}_k^0$ is identified under the isomorphism $\mathcal{J}_k^0 \cong \Pic^0(\mathcal{C}_k)$ with the kernel of $\nu^{*}$, and that moreover, $\mathcal{T}$ has rank equal to the rank of the singular cohomology group $H^1(\Gamma(\mathcal{C}_k), \zz)$, where $\Gamma(\mathcal{C}_k)$ is the dual graph as in the proof of Lemma \ref{lemma component group}.  It was shown in that proof that $\Gamma(\mathcal{C}_k)$ consists of a single cycle, so this rank is $1$ and the first claim is proved.

Now to prove the second claim, it is enough to show that the divisor class $[D] \in \Pic^0(C)$ extends to a divisor class in $\Pic^0(\mathcal{C})$ whose restriction to $\Pic^0(\mathcal{C}_k)$ lies in the kernel of $\nu^{*}$.  If $d = 2g + 1$ (resp. if $d = 2g + 2$), then the divisor $D$ is linearly equivalent to $D' := \sum_{3 \leq i \leq 2g + 1} (\alpha_i, 0) - (2g - 1)(\infty) \in \Div^0(C)$ (resp. $D := \sum_{3 \leq i \leq 2g + 2} (\alpha_i, 0) - g(\infty_1) - g(\infty_2) \in \Div^0(C)$) if $d = 2g + 1$; thus, we have $[D] = [D']$.  Now it is clear that the divisor $D'$ extends to a divisor of $\mathcal{C}$ over $\mathcal{R}$; we denote its restriction to $\mathcal{C}_k$ also by $D'$.  Moreover, since the normalization $\widetilde{\mathcal{C}}_k$ can be defined by the equation $y^2 = \prod_{3 \leq i \leq d} (x - \bar{\alpha}_i)$ with $\nu$ taking $(x, y)$ to $(x, y(x - \bar{\alpha}_i)^{-1})$, it is clear that the pullback of $D$ with respect to $\nu$ is the divisor of the function $y \in k(\widetilde{\mathcal{C}}_k)$ and therefore $\nu^{*}([D']) = 0 \in \Pic^0(\widetilde{\mathcal{C}}_k)$.  Since $2D \in \Div^{0}(\mathcal{C}_k)$ is the divisor of the function $(x - \bar{\alpha}_1)(x - \bar{\alpha}_2) \in \Div^0(\mathcal{C}_k)$ while an application of the Riemann-Roch formula shows that $D$ itself is not principal, we see that $[D]$ generates the kernel of $\nu^{*}$ restricted to the $2$-torsion subgroup of $\Pic^0(\mathcal{C}_k)$, and it follows that $T_2(\mathcal{T})$ is generated by an element whose image modulo $2$ is the class of $D$.

\end{proof}

The statement of the following proposition essentially implies that of Proposition \ref{prop Galois action local} for the $\ell = 2$ case, given the well-known equivalence between the algebraic and topological characterizations of the $2$-torsion subgroup of $\Pic^0(C)$ (as discussed in the footnote to the proof of \cite[Lemma 8.12]{mumford1984tata}).

\begin{prop} \label{prop Galois action local2}

With all of the above notations and assumptions, the image of the natural action of $G_{\mathcal{K}}$ on $T_2(J)$ is topologically generated by the element $t_{c}^{2m} \in \Sp(T_2(J))$ for some $c \in T_2(J)$ whose image in $T_2(J) / 2T_2(J) = J[2]$ is the class of the divisor $(\alpha_1, 0) + (\alpha_2, 0) - 2(\infty) \in \Div^0(C)$ (resp. $(\alpha_1, 0) + (\alpha_2, 0) - (\infty_1) - (\infty_2) \in \Div^0(C)$) if $d = 2g + 1$ (resp. if $d = 2g + 2$).

\end{prop}

\begin{proof}

Grothendieck's Orthogonality Theorem \cite[Th\'{e}or\`{e}me 2.4]{grothendieck1972modeles} says that the $\zz_2$-submodules $T_2(\mathcal{T}), T_2(J)^{G_{\mathcal{K}}} \subset T_2(J)$ are orthogonal to each other under the $2$-adic Weil pairing.  It easily follows from this and from the fact that the action of $G_{\mathcal{K}}$ respects the Weil pairing that for any $\sigma \in G_{\mathcal{K}}$, the operator $\rho_2(\sigma) - 1 \in \End(T_2(J))$ has kernel containing $T_2(J)^{G_{\mathcal{K}}}$ and image contained in $T_2(\mathcal{T})$ so that this operator induces a homomorphism $T_2(J) / T_2(J)^{G_{\mathcal{K}}} \to T_2(\mathcal{T})$; in fact, this yields a homomorphism 
$$N : G_{\mathcal{K}} \to \Hom_{\zz_2}(T_2(J) / T_2(J)^{G_{\mathcal{K}}}, T_2(\mathcal{T}))$$
 given by sending an element $\sigma \in G_{\mathcal{K}}$ to the homomorphism $T_2(J) / T_2(J)^{G_{\mathcal{K}}} \to T_2(\mathcal{T})$ induced by $\rho_2(\sigma) - 1$.  Since the above group of $\zz_2$-module homomorphisms is clearly itself a pro-$2$ group, the homomorphism $N$ factors through the maximal pro-$2$ (tame) quotient $G_{\mathcal{K}}^{(2)} \cong \zz_2$ of $G_{\mathcal{K}}$.  The image of a generator of $G_{\mathcal{K}}^{(2)}$ under $N$ can be interpreted in terms of the so-called \textit{monodromy pairing} associated to $\mathcal{T}$ -- see \cite[\S3.2]{papikian2013non} for a concise explanation.  Now \cite[Th\'{e}or\`{e}me 11.5]{grothendieck1972modeles} implies that the image under $N$ of a generator of $G_{\mathcal{K}}^{(2)}$, viewed as a homomorphism in $\Hom_{\zz_2}(T_2(J) / T_2(J)^{G_{\mathcal{K}}}$, has cokernel isomorphic to the $2$-power part of $\mathcal{J}_k / \mathcal{J}_k^0$.  Lemma \ref{lemma component group} implies that this cokernel is isomorphic to $\zz / 2^{v_2(2m)}\zz$.  It follows from the above set-up that for some $c \in T_2(J)$ which generates the rank-$1$ submodule $T_2(\mathcal{T})$, the operator $t_c^{2^{v_2(2m)}} \in \Sp(T_2(J))$ lies in the $2$-adic Galois image associated to $J$; by taking an apporopriate power of this operator, we get that $t_c^{2m}$ lies in this Galois image.  Now Lemma \ref{lemma toral part} implies that the image of $c$ modulo $2$ is identified with the divisor class given in the statement of the proposition, and the proposition is proved.

\end{proof}

\bibliographystyle{plain}
\bibliography{bibfile}

\end{document}